\newtheorem{theorem}{Theorem}[section]
\newtheorem{lemma}[theorem]{Lemma}
\newtheorem{proposition}[theorem]{Proposition}
\newtheorem{corollary}[theorem]{Corollary}
\theoremstyle{definition}
\newtheorem{definition}[theorem]{Definition}
\newtheorem{example}[theorem]{Example}
\newtheorem{remark}[theorem]{Remark}
\newcommand{\ZZ}{\mathbb{Z}}
\newcommand{\PP}{\mathbb{P}}
\newcommand{\RR}{\mathbb{R}}
\newcommand{\TT}{\mathbb{T}}
\newcommand{\FF}{\mathbb{F}}
\newcommand{\M}{\mathcal{M}}
\newcommand{\ppp}{\mathcal{P}}
\newcommand{\B}{\mathcal{B}}
\DeclareMathOperator{\Gr}{Gr}
\DeclareMathOperator{\Dr}{Dr}
\DeclareMathOperator{\conv}{conv}
\DeclareMathOperator{\Hom}{Hom}
\DeclareMathOperator{\trop}{trop}
\DeclareMathOperator{\val}{val}
\DeclareMathOperator{\Linearity}{S} %Feel free to chose different symbols
\DeclareMathOperator{\Lineality}{s}
\title[Matroids and their Dressians]{Matroids and their Dressians}
\author{Madeline Brandt}
\address{Department of Mathematics, Brown University, 151 Thayer St, Providence, RI, 02912}
\email{\href{mailto:madeline_brandt@brown.edu}{madeline\_brandt@brown.edu}}
\author{David E Speyer}
\address{Department of Mathematics, University of Michigan, 2844 East Hall, Ann Arbor, MI, 48109}
\email{\href{mailto:speyer@umich.edu}{speyer@umich.edu}}
\begin{document}

\begin{abstract} 
We study Dressians of matroids using the initial matroids of Dress and Wenzel.
These correspond to cells in regular matroid subdivisions of matroid polytopes.
%We characterize matroids that do not admit any proper matroid subdivisions.
An efficient algorithm for computing Dressians is presented, and its
implementation is applied to a range of interesting matroids.
We give counterexamples to a few plausible statements about matroid subdivisions.
\end{abstract}

\maketitle

\setcounter{tocdepth}{1}
\tableofcontents

%%%%%%%%%%%%%%%%%%%%%%%%%%%%%%%%%%%%%%%%%%%%%%%%%%%%%%

\section*{Introduction}

Let $K$ be an algebraically closed field with a non-trivial valuation, valuation ring $R$, and residue field $k$.
Consider a collection of vectors $v_1, \ldots, v_n \in R^d$ spanning $K^d$. These vectors give a rank $d$ matroid $\M$ on $n$ elements, whose bases are given by the bases of $K^d$ coming from the $v_1, \ldots, v_n$.
If we pass these vectors to the residue field $k$, their images will generate a matroid $\M'$, called an initial matroid of $\M$, which is a special kind of weak image of $\M$. 
%In this setting, the tropical Grassmannian of $\M$ records the possible residues of realizations of $\M$. 
One can also expand these ideas to non-realizable matroids, and the Dressian is the tropical object which records the possible \emph{initial matroids} of $\M$. 
%In this paper, we take this vantage point to study Dressians and tropical Grassmannians of matroids. 

The tropical Grassmannian was first introduced by Speyer and Sturmfels \cite{speyer_sturmfels}. Its connection to the space of phylogenetic trees and the moduli space of rational tropical curves is a celebrated and motivating result in studying these objects. 
In \cite{speyer}, it is demonstrated that points in $\mathbb{R}^{\binom{n}{d}}$ satisfying the tropicalized Pl\"{u}cker relations induce subdivisions of the $(d,n)$-hypersimplex whose cells are matroid polytopes. These points also correspond to tropical linear spaces. 
It has been observed (e.g., \cite{tropicalbook, tropplanes}) that these points also give valuations on the uniform matroid, as in \cite{dress}. 
The set of all valuations on the uniform matroid was dubbed the \emph{Dressian} in~\cite{tropplanes}. The authors of~\cite{tropplanes} introduce a Dressian for each matroid, whose points are valuations on that matroid.

Since then, many questions about Dressians have been studied. Bounds on the dimension of Dressians were given in \cite{js, tropplanes}. Rays of the Dressian have been studied in \cite{js, hjs}. 
%The question of when a matroid polytope has a split was studied in \cite{matroidbase,js}. 
Computing Dressians of uniform matroids has also been completed up to $d=3$ and $n=8$ \cite{tropplanes}. Recently, in \cite{martapaper}, the authors have studied the fan structure of Dressians and prove that the Dressian of the sum of two matroids is given by the product of their Dressians.

In this paper, we investigate the nature of Dressians of matroids further. 
Given a matroid $\M$ with valuation $v:\B(\M)\rightarrow \mathbb{R}\cup \{\infty\}$, we define the initial matroid $\M_v$ (as in \cite{dress,tropideals}) to be the matroid with basis set $
\B_v = \{\sigma \in \B\ |\ v(\sigma)\text{ is minimal}\}.
$
This gives a useful restriction on the notion of a weak map which is compatible with matroid valuations.

In Section \ref{sec:basics}, we give an overview of matroids, Dressians, subdivisions of the matroid polytope, and valuated matroids.
In Section \ref{sec:resclass}, we study initial matroids and their polytopes. 
%The main result of this section is the following Theorem \ref{thm:polytope}.
%\begin{maintheorem}
%\label{thm:polytope}
%\madeline{this is not a theorem, demote it}
%Let $\M$ be a matroid with matroid polytope $P_\M$, let $v$ be a valuation on $\M$, let $L$ be the lineality space of the Dressian of $\M$, and let $\Delta_v$ be the matroid subdivision of $P_\M$ induced by $v$. Then,
%$$
%\Delta_v=\{P(\M_w) \ |\ w \in v + L \}.
%$$
%\end{maintheorem}
We show that points in the tropical Grassmannian of a matroid over a field $K$ give weight vectors on the matroid polytope which induce regular matroid subdivisions containing cells corresponding to matroids which are also realizable over the field $K$. We explore failures of the converse to this coming from Speyer's thesis \cite{speyerthesis}, namely examples where all cells of a regular matroid subdivision are polytopes of realizable matroids, but the point of the Dressian inducing the subdivision is not contained in the Grassmannian.

%\begin{maintheorem}
%\label{thm:finestsubs}
% All maximal cells in a finest matroid subdivision of a matroid polytope of a connected matroid are matroid polytopes of connected rigid matroids. \madeline{Is the converse of Theorem B true? If all maximal cells in a (regular) matroid subdivision correspond to rigid matroids is
%this a finest matroid subdivision?}
%\end{maintheorem}

In Section~\ref{sec:algorithms}, we turn to the problem of effectively computing Dressians, and give Algorithm~\ref{algo:reduction} which reduces the number of variables and polynomials needed for computing Dressians. An implementation of Algorithm 1 can be found at 
\url{https://github.com/madelinevbrandt/dressians}.
This yields efficiencies which speed up the computations of Dressians of matroids. This is used when the Dressian is contained in a classical linear space; geometrically the equation reduction which occurs in the algorithm corresponds to projecting the Dressian onto this linear space.

In Section~\ref{sec:examples}, we use Algorithm~\ref{algo:reduction} to compute the Dressians of the star $10_3$ configuration, the non-Pappus matroid, the V\'{a}mos and non-V\'{a}mos matroids, the Desargues configuration, and others. In these examples, we illustrate interesting features of these Dressians.

%In Section \ref{sec:counterexamples} we investigate the question of when a matroid is rigid, meaning that the matroid polytope does not have matroid subdivisions.
%If all maximal cells in a regular matroid subdivision correspond to rigid matroids, then the subdivision is a finest matroid subdivision. 
%In Proposition \ref{prop:rigid} we give a criterion for rigidity of a matroid in terms of the connectivity of its initial matroids. %This allows us to answer Question 2 from \cite{martapaper} with the following Theorem \ref{thm:finestsubs}.

In Section \ref{sec:counterexamples}, we use our computational and theoretical results to obtain counterexamples to two reasonable conjectures. We give an example of a finest subdivision whose cells include non-rigid matroids (Theorem \ref{thm:finestsubs}). We also give an example of matroids $M$, $M'$, and $M''$ such that $M'$ is an initial matroid of $M$, and $M''$ is an initial matroid of $M'$, but $M''$ is not an initial matroid of $M$ (Theorem~\ref{theorem:nontransitive}).

\subsection*{Acknowledgements}

We thank Yue Ren and
Paul G\"{o}rlach for their assistance in computing tropical prevarieties. We thank Alex Fink, Felipe Rinc\'{o}n, and Mariel Supina for several useful conversations. %I thank David Speyer for explaining the examples in his thesis. 
We thank an anonymous referee for comments on an earlier version of this paper.
Finally, we thank Bernd Sturmfels for his comments and suggestions. 

 This material is based upon work supported by the National Science Foundation Graduate Research Fellowship Program under Grant No. DGE 1752814. Any opinions,
findings, and conclusions or recommendations expressed in this material are those of the
author and do not necessarily reflect the views of the National Science Foundation.

The second author was supported in part by NSF grants DMS-1855135 and DMS-1854225.

%%%%%%%%%%%%%%%%%%%%%%%%%%%%%%%%%%%%%%%%%%%%%%%%%%%%%%
\section{Dressians and tropical Grassmannians of matroids}
\label{sec:basics}

We begin with some notions from tropical geometry and matroid theory.
 Let $K$ be an algebraically closed field with a valuation $\val_K$. Let $I$ be an ideal in the Laurent polynomial ring with $n+1$ variables $K[x_0^{\pm 1}, \ldots, x_n^{\pm 1}]$. The \emph{tropical variety} associated to $I$ is defined as
$
\cap_{f \in I} \trop(V(f)),
$
where the $\trop(V(f))$ are the tropical hypersurfaces corresponding to polynomials $f \in I$ (See \cite[Definition 3.3.1]{tropicalbook}). 
For every ideal $I$ there exists a finite subset $B \subset I$ called a \emph{tropical basis} such that the tropical variety is equal to
$
\cap_{f \in B} \trop(V(f)).
$
Using a tropical basis one can compute the corresponding tropical variety. In many cases, however, it is computationally difficult to find a tropical basis. Given any collection of generators $B'$ for the ideal $I$, we call the set
$
\cap_{f \in B'} \trop(V(f))
$
 a \emph{tropical prevariety.}
The \emph{lineality space} of a tropical (pre)variety $T$ is the largest linear space $L$ such that for any point $w\in T$ and any point $v \in L$, we have that $w + v\in T$.

 A \emph{matroid} of rank $d$ on $n$ elements is a collection $\B \subset \binom{[n]}{d}$ called the \emph{bases} of $\M$ satisfying:
 \begin{enumerate}
 \item[(B0)] $\B$ is nonempty,
 \item[(B1)] Given any $\sigma, \sigma' \in B$ and $e \in \sigma' \backslash \sigma$, there is an element $f\in \sigma$ such that $\sigma \backslash \{f\} \cup \{e\} \in \B$.
 \end{enumerate}
 A matroid $\M$ is called \emph{realizable over $K$} if there exist vectors $v_1, \ldots, v_n \in K^d$ such that the bases of $K^d$ from these vectors are indexed by the bases of $\M$:
 $$
 \B = \left \{\sigma \in \binom{[n]}{d} \ \middle|\ \{v_{\sigma_1}, \ldots, v_{\sigma_d}\}\text{ is a basis of }K^d\right \}.
 $$
 In this case, we write $\M = \M[v_1, \ldots, v_n]$.
 The \emph{uniform matroid} $U_{d,n}$ is the matroid with basis set $\binom{[n]}{d}$.
For more information on matroids, we encourage the reader to consult \cite{oxley} or \cite{white}.

The \emph{Grassmannian} $G(d,n) \subset \mathbb{P}^{\binom{n}{d} -1}$ is the image of $K^{d \times n}$ under the \emph{Pl\"ucker embedding}, which sends a $d \times n$-matrix to the vector of its $d \times d$ minors. The entries of this vector are called the \emph{ Pl\"ucker coordinates} of the matrix. The Grassmannian is a smooth algebraic variety defined by equations called the \emph{Pl\"ucker relations}, which give the relations among the maximal minors of the matrix. Points of this variety correspond to $d$-dimensional linear subspaces of $K^n$.
The open subset $G^0(d,n)$ of the Grassmannian parametrizes subspaces whose Pl\"{u}cker coordinates are all nonzero. 
These points correspond to equivalence classes of matrices where no minor vanishes. In other words, these are matrices which give the uniform matroid of rank $d$ on $[n]$. 

We now recall the definition of the tropical Grassmannian and Dressian of a matroid, as in \cite{tropicalbook}.
Let $\M$ be a matroid of rank $d$ on the set $E = [n]$. For any basis $\sigma$ of $\M$, we introduce a variable $p_\sigma$. Consider the Laurent polynomial ring $K[p_\sigma^{\pm 1}\ |\ \sigma\text{ is a basis of }\M]$ in these variables.
Let $G_\M$ be the collection of polynomials obtained from the three-term Pl\"ucker relations by setting all variables not indexing a basis to zero. 
In other words, we start with the relations $p_{Sij}p_{Skl} - p_{Sik}p_{Sjl} + p_{Sil}p_{Sjk}$ for $S \in \binom{[n]}{d-2}$ and $i$, $j$, $k$, $l$ distinct elements of $[n] \setminus S$ and, in each of these trinomials, we replace $p_{\sigma}$ by $0$ if $\sigma$ is not a basis of $\M$.

Let $I_{\M}$ be the ideal generated by $G_\M$. We call $I_\M$ the \emph{matroid Pl\"{u}cker ideal} of $\M$, and refer to elements of $G_\M$ as \emph{matroid Pl\"{u}cker relations}.

The points of the variety $V(I_\M)$ correspond to realizations of the matroid $\M$ in the following sense. Points in $V(I_\M)$ give equivalence classes of $d \times n$ matrices whose maximal minors vanish exactly when those minors are indexed by a nonbasis of $\M$. We will call $V(I_\M)$ the \emph{matroid Grassmannian of $\M$}.
The variety $V(I_\M)$ is empty if and only if $\M$ is not realizable over $K$. 
Its tropicalization $\Gr_\M=\trop(V(I_{\M}))$ is called the \emph{tropical Grassmannian of $\M$}. 
If the rank of $\M$ is 2, then $G_{\M}$ is a tropical basis for $I_{\M}$ \cite[Chapter 4.4]{tropicalbook}.

\begin{definition} The \emph{Dressian} $\Dr_\M$ of the matroid $\M$ is the tropical prevariety obtained by intersecting the tropical hypersurfaces corresponding to elements of $G_\M$:
$$
\Dr_\M = \bigcap_{f \in G_\M} \trop(V(f)).
$$
\end{definition}

By definition, $\Gr_\M \subseteq \Dr_\M$. Equality holds if and only if $G_\M$ is a tropical basis.

Let $\M_1$ and $\M_2$ be matroids with disjoint ground sets $E_1$ and $E_2$ respectively, and basis sets $\B_1$ and $\B_2$ respectively. The \emph{direct sum} of $\M_1$ and $\M_2$ is the matroid
$
\M_1 \oplus \M_2
$
with ground set $E_1 \cup E_2$ and bases $B_1 \cup B_2$ such that $B_1 \in \B_1$ and $B_2 \in \B_2$. A matroid is \emph{connected} if it cannot be written as the direct sum of other matroids. The number of connected components of a matroid is the number of connected matroids it is a direct sum of. In \cite{martapaper}, the authors show that if $\M_1$ and $\M_2$ are matroids with disjoint ground sets, then 
$
\Dr_{\M_1 \oplus \M_2} = \Dr_{\M_1} \times \Dr_{M_2}.
$
For this reason, we will often assume that our matroids are connected.

The \emph{matroid polytope} $P_\M$ of $\M$ is the convex hull of the indicator vectors of the bases of $\M$:
$$
P_\M = \conv\{e_{\sigma_1} + \cdots + e_{\sigma_d}\ |\ \sigma \in \B\}.
$$
The dimension of $P_\M$ is $n - c$, where $c$ is the number of connected components of $\M$ \cite{eva_bernd}.
\begin{theorem}[\cite{ggms}] 
\label{thm:ggms} 
A polytope $P$ with vertices in $\{0,1\}^{n+1}$ is a matroid polytope if and only if every edge of $P$ is parallel to $e_i - e_j$. 
\end{theorem}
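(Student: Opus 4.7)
The plan is to prove the two directions separately. The forward direction is a short consequence of the symmetric exchange property of matroids; the reverse direction requires a local analysis at a distinguished vertex of $P$.

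For the forward implication, let $P = P_\M$ with basis set $\B$, and let $[e_\sigma, e_{\sigma'}]$ be an edge of $P$. I would invoke the symmetric exchange axiom: for any $\sigma, \sigma' \in \B$ and any $f \in \sigma \setminus \sigma'$, there exists $e \in \sigma' \setminus \sigma$ with $\tau := \sigma \setminus \{f\} \cup \{e\}$ and $\tau' := \sigma' \setminus \{e\} \cup \{f\}$ both in $\B$. If $|\sigma \triangle \sigma'| \geq 4$ then $\{\tau, \tau'\} \neq \{\sigma, \sigma'\}$, and the identity $e_\tau + e_{\tau'} = e_\sigma + e_{\sigma'}$ exhibits the midpoint of the edge as the midpoint of a distinct segment with endpoints in $P$, contradicting the defining property of a face of $P$. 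Hence $|\sigma \triangle \sigma'| = 2$, so $e_{\sigma'} - e_\sigma = e_i - e_j$ for some $i, j$.

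For the reverse implication, assume $P$ has $\{0,1\}$-vertices with every edge parallel to some $e_i - e_j$. Motion along such an edge preserves $\sum_i x_i$, and since the $1$-skeleton of $P$ is connected, all vertices share a common coordinate sum $d$; write the vertex set as $\{e_\sigma : \sigma \in \B\}$ for a family $\B$ of $d$-subsets of $[n+1]$. To check (B1), fix $\sigma, \sigma' \in \B$ and $e \in \sigma' \setminus \sigma$, and let $\tau^* \in \B$ be a basis containing $e$ that minimizes $|\tau^* \setminus \sigma|$ (such $\tau^*$ exists since $\sigma'$ is a candidate). The aim is to show this minimum equals $1$, so that $\tau^* = \sigma \setminus \{f\} \cup \{e\}$ for some $f \in \sigma$.

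For that I would take the linear functional $\phi(x) = Cx_e + \sum_{j \in \sigma} x_j$ with $C$ large enough that $\phi$ is maximized on $P$ at $e_{\tau^*}$, and use the standard fact that the tangent cone of $P$ at a vertex is generated by the edge directions $e_b - e_a$ for edges emanating from that vertex. Then $e_\sigma - e_{\tau^*}$ lies in this tangent cone and is a non-negative combination of such directions, and every edge used must weakly decrease $\phi$. Classifying the admissible swap pairs $(a,b)$ according to the partition of $[n+1]$ into $\sigma \cap \tau^*$, $\sigma \setminus \tau^*$, $(\tau^* \setminus \sigma) \setminus \{e\}$, $\{e\}$, and the complement of $\sigma \cup \tau^*$, then matching coefficients coordinate-by-coordinate, forces every swap whose incoming element $b$ lies outside $\sigma \cup \tau^*$ to have coefficient zero. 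This in turn kills the only admissible swaps that could contribute a $-1$ at a coordinate in $(\tau^* \setminus \sigma) \setminus \{e\}$, so that set must be empty, giving $|\tau^* \setminus \sigma| = 1$ as required. The main obstacle is precisely this coordinate-by-coordinate bookkeeping: tracking how non-negativity at coordinates outside $\sigma \cup \tau^*$ cascades to eliminate all swap types except the one that exchanges $e$ for an element of $\sigma \setminus \tau^*$.
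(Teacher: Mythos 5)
The paper attributes this statement to Gelfand--Goresky--MacPherson--Serganova and gives no proof of its own, so there is no in-text argument to compare against. Your proof is correct and is essentially the classical GGMS argument. In the forward direction, be aware that the ``symmetric exchange axiom'' you invoke (for $\sigma,\sigma'\in\B$ and $f\in\sigma\setminus\sigma'$, there is $e\in\sigma'\setminus\sigma$ with both $\sigma\setminus\{f\}\cup\{e\}$ and $\sigma'\setminus\{e\}\cup\{f\}$ bases) is not one of the basis axioms the paper states; it is Brualdi's symmetric exchange theorem and should be cited as such. Once that is granted, the midpoint obstruction correctly forces $|\sigma\triangle\sigma'|=2$, since a relative interior point of an edge cannot be a nontrivial convex combination of vertices off that edge. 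The reverse direction is also sound: all vertices share a coordinate sum because the $1$-skeleton is connected and every edge step preserves $\sum_i x_i$; the functional $\phi = Cx_e + \sum_{j\in\sigma}x_j$ with $C$ large is maximized at $e_{\tau^*}$; the vector $e_\sigma-e_{\tau^*}$ lies in the tangent cone at $e_{\tau^*}$ and so is a nonnegative combination of edge directions $e_b-e_a$ ($a\in\tau^*$, $b\notin\tau^*$), each with $\phi(e_b-e_a)\le 0$. Reading off coordinates outside $\sigma\cup\tau^*$ forces all coefficients on swaps into $[n+1]\setminus(\sigma\cup\tau^*)$ to vanish, while the $\phi$-inequality rules out any edge $(a,b)$ with $a\in\tau^*\setminus(\sigma\cup\{e\})$ and $b\in\sigma\setminus\tau^*$; together these leave no admissible swap that could supply the required $-1$ at a coordinate of $\tau^*\setminus(\sigma\cup\{e\})$, so that set is empty and $\tau^*=\sigma\setminus\{f\}\cup\{e\}$, verifying (B1).
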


Points in the Dressian of $\M$ have an interesting relationship to the matroid polytope of $\M$. Every vector $w$ in  $\mathbb{R}^{|B|}$ induces a regular subdivision $\Delta_w$ of the polytope $P_\M$.
A subdivision of the matroid polytope $P_\M$ is a \emph{matroid subdivision} if all of its edges are translates of $e_i - e_j$. Equivalently, by Theorem \ref{thm:ggms}, this implies all of the cells of the subdivision are matroid polytopes. 

\begin{proposition}[Lemma 4.4.6, \cite{tropicalbook}] Let $\M$ be a matroid, and let $w \in \mathbb{R}^{|B|}$. Then $w$ lies in the Dressian $\Dr_\M$ if and only if the corresponding regular subdivision $\Delta_w$ of $P_\M$ is a matroid subdivision.
\end{proposition}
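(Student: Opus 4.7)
The plan is to use Theorem \ref{thm:ggms} as the bridge between the combinatorial condition (matroid subdivision) and the algebraic condition (tropical Plücker relations). By that theorem, $\Delta_w$ is a matroid subdivision if and only if every edge in $\Delta_w$ connects vertices $e_\sigma$ and $e_{\sigma'}$ with $|\sigma \triangle \sigma'| = 2$. So I want to show that $w$ satisfies all three-term matroid Plücker relations tropically if and only if every edge of $\Delta_w$ has this form.

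First, I would reduce the problem to a local one on ``octahedra''. For each $(d-2)$-subset $S \subseteq [n]$ and each four-element subset $\{i,j,k,l\} \subseteq [n] \setminus S$, let $O_{S,ijkl}$ be the (possibly degenerate) octahedron spanned by those indicator vectors $e_{S \cup \{a,b\}}$ with $\{a,b\} \subset \{i,j,k,l\}$ that are bases of $\M$. The three pairs of ``opposite'' vertices are $\{e_{S\cup\{i,j\}}, e_{S\cup\{k,l\}}\}$, $\{e_{S\cup\{i,k\}}, e_{S\cup\{j,l\}}\}$, and $\{e_{S\cup\{i,l\}}, e_{S\cup\{j,k\}}\}$, and the diagonals of the octahedron are the segments between opposite vertices; note these are precisely the segments between vertices of $P_\M$ that are \emph{not} parallel to any $e_a - e_b$. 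The key local observation is that any edge of $\Delta_w$ that fails to be parallel to some $e_a - e_b$ arises as a diagonal of one such octahedron (after choosing $S$ to be the intersection of the two endpoint bases and $\{i,j,k,l\}$ to be their symmetric difference).

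Next, I would match each three-term matroid Plücker relation with the corresponding octahedron. The three monomials in
\[
p_{Sij}p_{Skl} - p_{Sik}p_{Sjl} + p_{Sil}p_{Sjk}
\]
(with variables indexed by non-bases set to $0$) correspond exactly to the three pairs of opposite vertices of $O_{S,ijkl}$. A standard regular-subdivision computation shows that the regular subdivision of $O_{S,ijkl}$ induced by $w$ contains a diagonal edge if and only if the tropical minimum among $w_{Sij}+w_{Skl}$, $w_{Sik}+w_{Sjl}$, $w_{Sil}+w_{Sjk}$ is attained by a \emph{unique} finite term (with terms involving non-basis Plücker variables read as $+\infty$). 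Equivalently, the tropical matroid Plücker relation for $(S,i,j,k,l)$ is satisfied precisely when no diagonal of $O_{S,ijkl}$ appears as an edge in the induced subdivision.

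Combining these two steps gives the equivalence in both directions. If $w \in \Dr_\M$, then no octahedron has a diagonal edge in its induced subdivision, so every edge of $\Delta_w$ is parallel to some $e_a - e_b$, hence $\Delta_w$ is a matroid subdivision by Theorem \ref{thm:ggms}. Conversely, if $\Delta_w$ is a matroid subdivision, then no octahedron has a diagonal edge, and each tropical Plücker relation is satisfied. The main obstacle is the local-to-global reduction: namely, verifying rigorously that \emph{every} non-$(e_a-e_b)$ edge of $\Delta_w$ is captured as a diagonal of some octahedron $O_{S,ijkl}$ (which uses the fact that matroid bases of $\M$ that lie on such an edge differ by a symmetric exchange), together with the mild bookkeeping needed when some of the six Plücker coordinates indexed by the octahedron correspond to non-bases.
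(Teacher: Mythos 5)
The paper does not give its own proof of this proposition; it is cited directly from Maclagan--Sturmfels \cite{tropicalbook}, Lemma 4.4.6. So there is no in-paper argument to compare your attempt against; I will just assess the attempt on its own.

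Your ``converse'' direction ($\Delta_w$ matroidal $\Rightarrow w\in\Dr_\M$) is fine: once a cell $Q$ of $\Delta_w$ is a matroid polytope, its intersection with the $3$-face of $\Delta(d,n)$ indexed by $(S,\{i,j,k,l\})$ is again a matroid polytope (a face of $Q$), and the three possible matroidal subdivisions of that octahedron correspond exactly to the tropical minimum in the $3$-term relation being attained twice.

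The forward direction ($w\in\Dr_\M \Rightarrow \Delta_w$ matroidal) is where the gap is, and it is exactly at the step you flag as ``the main obstacle.'' You claim that any edge of $\Delta_w$ failing GGMS ``arises as a diagonal of one such octahedron, after choosing $S$ to be the intersection of the two endpoint bases and $\{i,j,k,l\}$ to be their symmetric difference,'' and you assert parenthetically that ``matroid bases of $\M$ that lie on such an edge differ by a symmetric exchange.'' This last statement is precisely what needs to be proved, not a fact you get for free. A priori, an edge $[e_\sigma,e_\tau]$ of a regular-subdivision cell could have $|\sigma\triangle\tau|=2k$ for any $k\le \min(d,n-d)$, and when $k>2$ the two endpoints do not lie in any common $3$-face of the hypersimplex, so the $3$-term relations say nothing directly about that edge. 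You would then need a genuine induction (or ``chaining'' through intermediate octahedra) to reach a contradiction, and this is nontrivial.

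The standard route for this implication is not the edge-by-edge argument you sketch. One shows instead that the $3$-term tropical Pl\"ucker relations imply the full valuated-matroid exchange axiom (V0) --- this is a theorem of Dress and Wenzel and is itself an induction on $|\sigma\triangle\tau|$. Granting (V0), one argues directly that each cell $Q$ of $\Delta_w$ has vertex set satisfying the basis exchange axiom: choose a linear shift $v=w-\phi$ with $v\equiv 0$ on $\B_Q$ and $v>0$ off $\B_Q$, apply (V0) to $\sigma,\tau\in\B_Q$ and $e\in\sigma\setminus\tau$ to get $f$ with $0=v(\sigma)+v(\tau)\ge v(\sigma\setminus e\cup f)+v(\tau\setminus f\cup e)\ge 0$, and conclude both exchanged sets lie in $\B_Q$. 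As a corollary this shows every subdivision edge has $|\sigma\triangle\tau|=2$, which is the statement your sketch assumed. Without some form of this (V0) argument or an explicit induction on symmetric-difference size, the local-to-global step does not go through, so the proof as written is incomplete in the direction that matters.
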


All matroids admit the trivial subdivision of their matroid polytope as a regular matroid subdivision, so the Dressian $\Dr_\M$ is nonempty for all matroids $\M$.

We now discuss the valuated matroids of \cite{dress}.
 Let $\M$ be a matroid on $E = \{1, \ldots, n\}$ of rank $d$ and bases $\B$. Let $v: \B \rightarrow \mathbb{R}\cup \{\infty\}$ satisfy the following version of the exchange axiom:
 \begin{enumerate}
 \item[(V0)] for $B_1, B_2 \in \B$ and $e \in B_1\backslash B_2$, there exists an $f \in B_2\backslash B_1$ with $B_1' = (B_1 \backslash \{e\}) \cup \{f\} \in \B$, $B_2' = (B_2 \backslash\{f\}) \cup \{e\} \in \B$, and $v(B_1) + v(B_2) \geq v(B_1') + v(B_2')$.
  \end{enumerate}
   We will call $v$ a \emph{valuation on} $\M$, and the pair $(\M,v)$ is called a \emph{valuated matroid} (See \cite{dress} for details). It is known that valuations on a matroid $\M$ are exactly the points in $\Dr_\M$ \cite{tropicalbook}. Indeed, the above condition asserts exactly that the tropicalized matroid Pl\"{u}cker relations hold.

%\begin{proposition} Let $\M$ be a matroid, and consider its dual $\M^*$. Then the map $\Dr_{\M} \rightarrow \Dr_{\M^*}$ given by $(p_b)_{b \in \B} \mapsto (p_b)_{E \backslash b \in \B^*}$ is an isomorphism. \madeline{make sure that this is not stated for Dressians elsewhere, and consider deleting it anyway because we don't necessarily need it.}.
%\end{proposition}
%\begin{proof}
%This is equivalent to Proposition 1.4 in \cite{dress}. \madeline{consider including a proof for completeness}
%\end{proof}

%%%%%%%%%%%%%%%%%%%%%%%%%%%%%%%%%%%%%%%%%%%%%%%%%%%%%%
\section{Initial matroids and their polytopes}
\label{sec:resclass}

Let $\M$ be a rank $d$ matroid on $n$ elements which is realizable over a field $K$ with valuation $\val_K$.  Let $\Gamma$ be the value group, let $R$ be the valuation ring of $K$, let $m$ be its maximal ideal, and let $k$ be its residue field. If $K$ is an algebraically closed field and $\val_K$ is a nontrivial valuation, then by the Fundamental Theorem of Tropical Geometry \cite[Theorem 3.2.3]{tropicalbook} points on $\Gr_\M \cap \Gamma^{|B|}$  are all of the form $(\val_K(p_b))_{b\in \B}$ where $(p_b)_{b \in \B} \in (K^*)^{|B|}$ is a point on the matroid Grassmannian $V(I_\M)$. Possibly by multiplying $(p_b)_{b \in \B}$ by an element of $R$, we may assume that $(p_b)_{b \in \B} \in (R)^{|B|}$ and that some coordinate has valuation 0. Let $M$ be a $d \times n$ matrix realizing $\M$ which we may assume is over $R$. Consider the reduction map $\pi: R \rightarrow k$. Then $\pi(M)$ gives a matroid $\M[\pi(M)]$. In what follows we investigate how this matroid is related to $\M$, and in what way it depends on the choice of element in $\Gr_\M$. First, we expand this notion to nonrealizable matroids.

\begin{definition} Let $\M$ be a matroid with bases $\B$ and let $v \in \Dr_\M$. Then the \emph{initial matroid} $\M_v$ is the matroid whose bases are
$
\B_v = \{\sigma \in \B\ |\ v(\sigma)\text{ is minimal}\}.
$
Given a matroid $\M$, the \emph{initial matroids} of $\M$ are the matroids $\M'$ such that there exists a $v \in \Dr_\M$ with $\M_v = \M'$.
\end{definition} 

\begin{remark}
\label{rem:allones}
If $v$ and $w$ are valuations of a matroid $\M$ such that $v-w \in \mathbb{R}(1,1,\ldots,1)$, then they give the same initial matroid: $\M_v= \M_w$. So, we can consider $\Dr_\M$ and $\Gr_\M$ in the tropical projective space $\mathbb{R}^{|B|} / \mathbb{R} (1,1,\ldots,1)$.
The lineality space of $\Gr_{\M}$ is (usually) larger than $\mathbb{R} (1,1,\ldots,1)$. 
However, points which are equivalent modulo lineality may give different initial matroids. We explore the relationship between such matroids in Proposition~\ref{prop:polytope}.
\end{remark}

%\begin{conjecture}
%For a matroid M, we have $\Dr_\M = \Gr_\M$ if and only $\dim(\Dr_\M) = \dim(\Gr_\M)$
%\end{conjecture}
%
%

We now give an example to illustrate the ideas and results in the rest of the section.

\begin{example}
%\begin{figure}[ht]
%    \centering
%    \includegraphics[height = 2.2 in]{}
%    \caption{Initial matroids of $U_{2,4}$. Parallel elements are indicated by concentric circles and lomartapaper are indicated by an x. \madeline{It is confusing to identify isomorphic matroids in the context of matroid subdivisions. Different cells in a subdivision may correspond to isomorphic matroids} \madeline{also perhaps these poset pictures should be eliminated entirely, since it is not a poset}}
%    \label{fig:u24residues}
%\end{figure}

Let $\M = U_{2,4}$, the uniform rank 2 matroid on 4 elements; $\B = \{01,02,03,12,13,23\}$. %In %Figure~\ref{fig:u24residues} we give all initial matroids of $U_{2,4}$.
We now study the Dressian of $\M$. 
 In this case, $G_\M \subset \mathbb{C}\{\{t\}\}[p_{01}, p_{02}, p_{03}, p_{12}, p_{13}, p_{23}]$ consists of the single equation
 $p_{03}p_{12} - p_{02}p_{13} + p_{01}p_{23}.$
 So, we have that the Dressian $\Dr_{M}$ and the Grassmannian $\Gr_\M$ coincide, and they are both described by
 $$
 \min\{p_{03}+p_{12} , p_{02}+p_{13} , p_{01}+p_{23}\}\text{ is attained twice.}
 $$
  The Dressian is a 5 dimensional fan with a four dimensional lineality space. Let the basis for $\mathbb{R}^{|\B|}$ be given by $\{e_{01},e_{02},e_{03},e_{12},e_{13},e_{23}\}$. 
  The lineality space is given by
 $$
 L = \text{span}\left ((1,1,1,0,0,0),(1,0,0,1,1,0),(0,1,0,1,0,1),(0,0,1,0,1,1)\right ).
$$
The Dressian $\Dr_\M$ has 3 maximal cones, which are the rays generated by
$$
r_{01,23} = (1,0,0,0,0,1)\ \  r_{02,13}=(0,1,0,0,1,0)\ \  r_{03,12}=(0,0,1,1,0,0).
$$
The matroid polytope $P_\M$ is the hypersimplex $\Delta(2,4)$, which is an octahedron. Each of the cones of $\Dr_\M$ corresponds to a subdivision of $P_\M$ into two pyramids. Let us study points in the cell of $\Gr_\M$ containing $r_{01,23}$. The point $r_{01,23}$ induces a subdivision where the two maximal cells are the pyramids which are the convex hulls of
$$
P_{01} = \conv\{e_{01},e_{02},e_{03},e_{12},e_{13}\},\ \ \ P_{23} = \conv\{e_{23},e_{02},e_{03},e_{12},e_{13}\}.
$$
The matroid $\M_{r_{01,23}}$ has bases $\{02,03,13,12\}$. Its matroid polytope is the square face which is shared by the pyramids $P_{01}$ and $P_{23}$. Over $\mathbb{C}\{\{t\}\}$, we can realize $\M$ with the matrix
$$
\begin{bmatrix} 
1 & 1 & 1 & 1 \\
1 + t & 1 + 2 t & t & 2t \\
\end{bmatrix},
$$
and the resulting Pl\"{u}cker vector valuates to $r_{01,23}$.  This matrix reduces to a matrix over $\mathbb{C}$ whose matroid is $\M_{r_{01,23}}$. Alternatively, we can also realize $\M$ with the matrix 
$$
\begin{bmatrix} 
1 & 1 & 1 & 1 \\
1 & 2 & 3 & 3 + t^2
\end{bmatrix}.
$$
The Pl\"{u}cker coordinate of this matrix valuates to 
$$v = (0,0,0,0,0,2)=r_{01,23} - (1,0,0,0,0,-1) \in r_{01,23}+L.$$
The matroid $\M_v$ is the matroid with bases $\{01,02,03,12,13\}$, whose matroid polytope is $P_{01}$. Additionally, the matrix above reduces to a matrix over $\mathbb{C}$ whose matroid is exactly $\M_{v}$.
\end{example}

\begin{lemma}
\label{lem:resclass}
Let $\M$ be a rank $d$ matroid on $n$ elements which is realizable over a field $K$ with nontrivial valuation $\val_K$. Let $R$ be the valuation ring of $K$ and let $m$ be its maximal ideal, and $k$ its residue field, with reduction map $\pi$. Let $v \in \Gr_\M$ so that $\min(v) = 0$ and let $M$ be a matrix over $R$ realizing $\M$ whose Pl\"{u}cker coordinate is $v$. Then the initial matroid $\M_v$ is $\M[\pi(M)]$.
\end{lemma}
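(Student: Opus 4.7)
The plan is to unwind the definitions directly, using only that the reduction map $\pi \colon R \to k$ is a ring homomorphism and therefore commutes with taking determinants. For $\sigma \in \binom{[n]}{d}$, write $M_\sigma$ for the $d \times d$ submatrix on the columns indexed by $\sigma$. The Pl\"ucker coordinates of $M$ are its maximal minors, so by hypothesis $v(\sigma) = \val_K(\det M_\sigma)$ for $\sigma \in \B$, and $\det M_\sigma = 0$ whenever $\sigma \notin \B$ (i.e.\ $v(\sigma) = \infty$). Applying $\pi$ entrywise gives a matrix $\pi(M)$ over $k$ whose $\sigma$-minor is $\det \pi(M)_\sigma = \pi(\det M_\sigma)$.

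The main step is then to identify the bases of $\M[\pi(M)]$ with $\B_v$. Since $M$ is taken over $R$, every minor lies in $R$, so $v(\sigma) \geq 0$ for all $\sigma \in \B$, and the normalization $\min(v) = 0$ ensures that equality holds for at least one basis. For $\sigma \in \B$, the element $\det M_\sigma \in R$ is a unit precisely when $\val_K(\det M_\sigma) = 0$, which is precisely the condition $\sigma \in \B_v$; in that case $\pi(\det M_\sigma) \neq 0$, and otherwise $\det M_\sigma \in m$ and the reduction vanishes. For $\sigma \notin \B$ the minor is already $0$, so the reduction is zero as well. Combining these cases, the set of $\sigma$ for which $\det \pi(M)_\sigma \neq 0$ is exactly $\B_v$, which gives both containments at once and proves $\M[\pi(M)] = \M_v$.

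There is essentially no obstacle here: the lemma is a dictionary between the tropical condition ``$v(\sigma)$ is minimal'' and the algebraic condition ``the reduced $\sigma$-minor is nonzero,'' and it falls out of the fact that valuation zero in $R$ is the same as being a unit. The two ancillary points worth noting are that $\B_v$ is nonempty, so $\pi(M)$ does have a nonvanishing $d \times d$ minor and genuinely defines a rank $d$ matroid on $[n]$, and that the assumption $v \in \Gr_\M$ with $\min(v) = 0$ is compatible with choosing a representative matrix $M$ over $R$ (as discussed in the paragraph preceding the statement).
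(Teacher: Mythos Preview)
Your proof is correct and follows essentially the same argument as the paper's: both identify the bases of $\M[\pi(M)]$ as the $\sigma$ for which $\pi(\det M_\sigma)\neq 0$, which is precisely the condition $\val_K(\det M_\sigma)=0$, i.e.\ $\sigma\in\B_v$. Your version is more explicit (spelling out that $\pi$ commutes with determinants and that valuation zero in $R$ means unit), but the logical route is the same.
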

\begin{proof} The bases of $\M[\pi(M)]$ are indices $\sigma$ of the Pl\"{u}cker coordinate of $\pi(M)$ which do not vanish. In $M$, the corresponding Pl\"{u}cker coordinates necessarily have valuation $0$, and since this is minimal, they will be bases of $\M_v$. Conversely, all Pl\"{u}cker coordinates of $M$ with valuation 0 index columns of $\pi(M)$ whose Pl\"{u}cker coordinates do not vanish, so we have $\M_v = \M[\pi(M)]$, the matroid of $\pi(M)$.
\end{proof}

This lemma tells us that for realizable matroids, initial matroids are reductions, and vice versa. Now, we turn our attention to how initial matroids sit inside the matroid polytope $P_\M$.%, and prove Theorem \ref{thm:polytope} from the introduction.

\begin{proposition}
\label{prop:polytope}
Let $\M$ be a matroid with matroid polytope $P_\M$, let $v$ be a valuation on $\M$, let $L$ be the lineality space of the Dressian of $\M$, and let $\Delta_v$ be the matroid subdivision of $P_\M$ induced by $v$. Then,
$$
\Delta_v=\{P_{\M_w} \ |\ w \in v + L \}.
$$
\end{proposition}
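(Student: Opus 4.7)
My plan is to express both sides of the equation via a common set of ``affine'' shifts $L_c \in \R^{|\B|}$ defined by $L_c(\sigma) = \sum_{i \in \sigma} c_i$ for $c \in \R^n$.

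First I would express the cells of $\Delta_v$ in terms of these shifts. By the standard description of cells of a regular subdivision, each cell is $\conv\{e_\sigma : \sigma \text{ attains } \min_{\tau}(v(\tau) + \langle c, e_\tau\rangle)\}$ for some $c \in \R^n$, and every such $c$ produces a cell. Since $\langle c, e_\sigma\rangle = L_c(\sigma)$, the indices attaining this minimum are exactly the bases of $\M_{v + L_c}$, so the cell equals $P_{\M_{v + L_c}}$. Hence $\Delta_v = \{P_{\M_{v + L_c}} \mid c \in \R^n\}$.

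Next I would identify $L$ with $\{L_c \mid c \in \R^n\}$. The inclusion $\{L_c\} \subseteq L$ is an elementary computation: in each three-term tropical Pl\"ucker relation indexed by $(S, i, j, k, l)$, replacing $v$ by $v + L_c$ shifts each of the three pairwise sums by the same quantity $2\sum_{a \in S} c_a + c_i + c_j + c_k + c_l$, preserving the ``minimum attained twice'' condition. For the reverse inclusion, observe that $0 \in \Dr_\M$ (the trivial subdivision is matroidal), so $\ell \in L$ forces both $\ell$ and $-\ell$ to lie in $\Dr_\M$. This forces every three-term Pl\"ucker relation to hold with all three pairwise sums equal:
$$
\ell(Sij) + \ell(Skl) = \ell(Sik) + \ell(Sjl) = \ell(Sil) + \ell(Sjk).
$$
A standard induction on symmetric difference from a reference basis, using the basis exchange axiom and connectivity of $\M$, then recovers coefficients $c_i$ with $\ell = L_c$. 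The disconnected case reduces to components via $\Dr_{\M_1 \oplus \M_2} = \Dr_{\M_1} \times \Dr_{\M_2}$.

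Putting the two steps together, $\{P_{\M_w} \mid w \in v + L\} = \{P_{\M_{v + L_c}} \mid c \in \R^n\} = \Delta_v$. The main obstacle is the reverse inclusion $L \subseteq \{L_c\}$: propagating the equality versions of the Pl\"ucker relations across the basis exchange graph to recover the $c_i$'s is where connectivity is essential, so the general case requires a reduction to the direct-sum components.
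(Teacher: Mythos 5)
Your core observation---that the cells of $\Delta_v$ are exactly $P_{\M_{v+L_c}}$ as $c$ ranges over $\R^n$---is the same engine the paper uses, expressed via the same description of lower faces of the lifted polytope. Where you diverge is in making the identification $L = \{L_c : c \in \R^n\}$ explicit. The paper's proof of this proposition only establishes the easy inclusion $\{L_c\} \subseteq L$ (from the fact that each $L_c$ extends affinely to $P_\M$ and hence lies in the lineality space), and it never explicitly argues the reverse containment $\{P_{\M_w} : w \in v + L\} \subseteq \Delta_v$: the paper shows $P_{\M_u} \in \Delta_u$ for any valuation $u$, but identifying $\Delta_w$ with $\Delta_v$ for $w \in v + L$ requires exactly the nontrivial direction $L \subseteq \{L_c\}$, which is only established later in Section~\ref{sec:algorithms} (the unlabeled theorem identifying $\Lineality(\Dr_M)$ with $\Phi^{\ast}\R^n$). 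So your proposal to prove both inclusions of $L = \{L_c\}$ up front both clarifies the logic and, in effect, fills a forward reference in the paper's argument.

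One caveat on your sketch of $L \subseteq \{L_c\}$: the step from ``$\ell$ and $-\ell$ both in $\Dr_\M$ force equality in every three-term tropical Pl\"ucker relation'' to ``$\ell$ is affine on $P_\M$'' is the genuinely hard part, and ``a standard induction on symmetric difference'' glosses it. Your propagation along the basis exchange graph can be made to work, and you are right to flag that it needs connectivity plus the product formula $\Dr_{\M_1 \oplus \M_2} = \Dr_{\M_1} \times \Dr_{\M_2}$ to reduce the disconnected case. The paper's own argument in Section~\ref{sec:algorithms} sidesteps connectivity by citing an octahedron criterion: if $\ell$ does not extend affinely, then some octahedron in $P_\M$ is nontrivially subdivided, say with $\ell_{Sab}+\ell_{Scd} > \ell_{Sac}+\ell_{Sbd} = \ell_{Sad}+\ell_{Sbc}$, and then $-\ell$ violates the min-attained-twice condition. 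If you want a self-contained argument, that octahedron contradiction is shorter and needs no case analysis on connected components.
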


\begin{proof} First, we show that $P_{M_v}$ is a cell of $\Delta_v$. To that end, we must show that there is a linear functional $l$ on $\mathbb{R}^{|\B|+1}$ whose last coordinate is positive such that the face of $\conv((e_\sigma,v_\sigma)_{\sigma \in \B})$ minimized by $l$ is the matroid polytope of $\M_v$. We obtain $\M_v$ by taking bases $\sigma$ with $v_\sigma$ minimal; in other words, the linear functional $l = (0,\ldots, 0, 1) \in (\mathbb{R}^{|B|+1})^{\vee}$ works.

Now, let $P$ be a polytope in $\Delta_v$. Then, there is a linear functional $l \in (\mathbb{R}^{|\B|+1})^\vee$ with last coordinate scaled to 1 such that $P =
\conv\left(e_\sigma \ |\ l \cdot (e_{\sigma}, v_\sigma) \text{ is minimal}\right).
$
Since $l$ is linear on the vertices of the matroid polytope $P_\sigma$, 
the restriction $l|_{\mathbb{R}^{|\B|}}$ induces the trivial subdivision on $P_\M$, and is therefore contained in the lineality space of the Dressian. Then, the vector $w = (l \cdot e_\sigma)_{\sigma \in B} + v$ is such that $P_{\M_w} = P$.
\end{proof}

\begin{remark}
If $v$ is a valuation on $\M$, the identity map on the ground set $\M\rightarrow \M_v$ is a weak map (see \cite{weakmap}). There are examples of weak maps which do not arise in this way \cite[Section 3]{dress}.
In Theorem~\ref{theorem:nontransitive}, we will give an example of a weak map between connected matroids which does not arise this way, answering a question from~\cite[Question 1]{martapaper}.
 By \cite[Proposition 4.4]{speyer}, when $\M$ is uniform all weak images are initial matroids.
\end{remark}

\begin{remark}
Initial matroids as in \cite[Definition 4.2.7]{tropicalbook} are a special case of the initial matroids here. Let $\M$ be a rank $d$ matroid on $n$ elements. Given a weight vector $w' \in \mathbb{R}^n$, we can make a weight vector $w \in \mathbb{R}^{|\B|}$ by taking 
$
w_\sigma = -\sum_{i \in \sigma} w'_{i}.
$
Any weight vector $w$ arising in this way is in the lineality space of $\Dr_\M$ and induces a trivial subdivision on $P_{\M}$. The initial matroid $\M_w$ will be the initial matroid corresponding to $w'$ by \cite[Proposition 4.2.10]{tropicalbook}. Among the cells of matroid subdivisions of $P_{\M}$, these initial matroids only correspond to faces of $P_{\M}$, while initial matroids in general give all cells of matroid subdivisions by Proposition~\ref{prop:polytope}.
\end{remark}

%Lastly, we include a result about matroids differing by one basis. This expands upon Proposition 3.1 in \cite{dress}. Here, the authors prove that if $\M$ and $\M'$ are matroids of rank $d$ on $n$ elements such that $\B' = \B\cup{\sigma}$, then valuations of $B$ can be extended to valuations of $\B'$.
%\begin{proposition}
%Let $\M$ and $\M'$ be matroids of rank $d$ on $n$ elements such that $\B' = \B\cup{\sigma}$. Then $\Dr_\M$ is the projection of $\Dr_{\M '}$ along the $e_\sigma$ axis. Furthermore, if $f$ is the $f$-vector of $\Dr_{\M}$ and it has length $l$, then the $f$-vector of $\Dr_{\M'}$ is $(1 + f_0, f_0 + f_1, \ldots, f_{l-1} + f_l, f_l )$.
%\end{proposition}
%\begin{proof}
%\end{proof}
% \madeline{my current thinking is that this prop is probably not true}
%\madeline{consider including something similar to Dress' corollary 2.18 and Prop 2.17, constructing maximal cells.}

%%%%%%%%%%%%%%%%%%%%%%%%%%%%%%%%%%%%%%%%%%%%%%%%%%%%%%

The Dressian does not depend on the field over which it is defined. On the other hand, the Grassmannian of a matroid, which is always contained in the Dressian, does depend on the residue characteristic of the field. We now give a result which explains the dependence on the residue characteristic, and gives a criterion to distinguish whether a point in the Dressian is contained in the Grassmannian of a matroid. First, we study an example.

\begin{example}
The non-Fano matroid is the rank 3 matroid on 7 elements with nonbases $\{014,$ $025,$ $036,$ $126,$ $234,$ $456\}$. It is depicted in Figure~\ref{fig:nonfano}.
\begin{figure}[ht]
    \centering
    \includegraphics[height = 1.5 in]{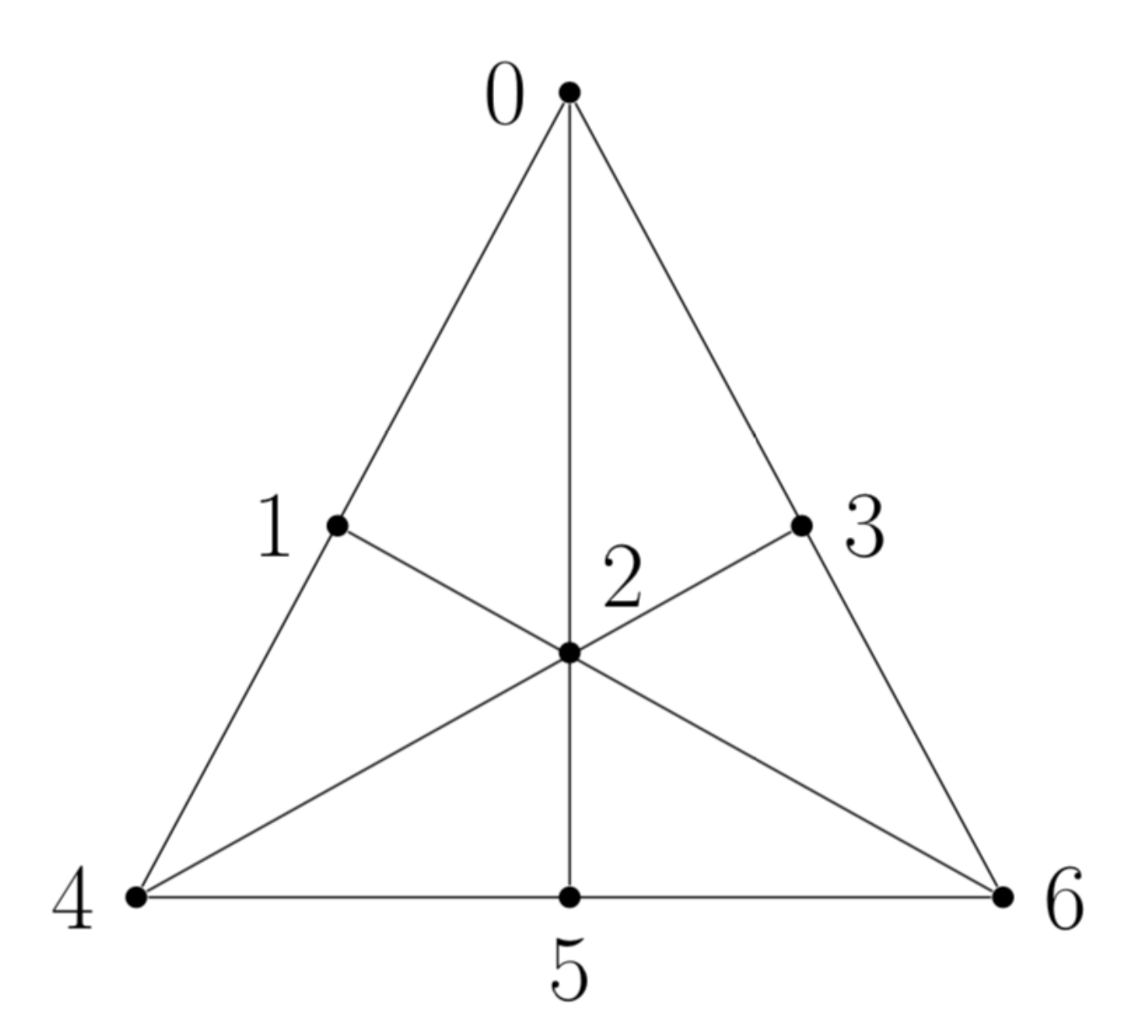}
    \caption{The non-Fano matroid.}
    \label{fig:nonfano}
\end{figure}
Its Dressian has dimension 8 with a 7 dimensional lineality space. Modulo this lineality space, it consists of a single ray. Subdivisions induced by points on the ray contain a cell which is the matroid polytope of the Fano matroid. Over fields which do not have characteristic 2, the Grassmannian consists only of the lineality space. 
Over a field of characteristic 2, the Grassmannian is empty for the following reason: the lineality space corresponds to the trivial subdivision, whose sole facet is not realizable in characteristic 2. Points in the interior of the Dressian are, modulo the lineality space, equivalent to vectors where $p_{135}>0$, $p_{ijk}=0$ for all other bases $(i,j,k)$ of the non-Fano matroid and $p_{ijk} = \infty$ for non-bases. These vectors do not obey the relation from \cite[Proposition 4.5.9]{speyerthesis}.
%On the other hand, over a field of characteristic 2 the Dressian and the Grassmannian coincide. %The non-Fano matroid is not rigid, but the Fano matroid is rigid.
\end{example}

\begin{proposition}
\label{prop:realizability}
Let $\M$ be a matroid and $K$ be an algebraically closed field with nontrivial valuation $\val_K$ and residue field $k$. Then,
$$
\Gr_\M \subset \{v \in \mathbb{R}^{|\B|}\ |\ \text{all cells of }\Delta_v \text{ are matroid polytopes of matroids which are realizable over }k\} \subset \Dr_{\M}
$$
If $\Gr_\M = \Dr_\M$,
then no regular matroidal subdivision of the matroid polytope $P_\M$ contains a cell which is the matroid polytope of a non-realizable matroid,
and all initial matroids of $\M$ are realizable. Both of the subsets above can be strict.
\end{proposition}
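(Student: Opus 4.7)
My plan is to establish the two inclusions separately, deduce the ``$\Gr_\M = \Dr_\M$'' consequence as a formal corollary, and exhibit the claimed strictness via examples.

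For the left inclusion, I would take $v \in \Gr_\M$ and use the Fundamental Theorem of Tropical Geometry (after rescaling so $\min v = 0$) to write $v_\sigma = \val_K(p_\sigma(M))$ for some $d \times n$ matrix $M$ over $R$ realizing $\M$. By Proposition~\ref{prop:polytope}, every cell of $\Delta_v$ equals $P_{\M_w}$ for some $w \in v + L$, and the proof of that proposition shows the shift $w - v$ comes from restricting a linear functional on $\mathbb{R}^{n+1}$ to the $\mathbb{R}^n$ containing $P_\M$ and evaluating at the vertices $e_\sigma$, giving $w_\sigma - v_\sigma = \sum_{i \in \sigma} c_i$ for real constants $c_i$. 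After approximating the $c_i$ by values in the value group $\Gamma$ sufficiently close to preserve $\M_w$, I would choose $\lambda_i \in K^*$ with $\val_K(\lambda_i) = c_i$ and rescale the $i$th column of $M$ by $\lambda_i$; the resulting matrix $M'$ realizes $\M$ over $R$ with Pl\"ucker vector valuating to $w$. Lemma~\ref{lem:resclass} then identifies $\M_w$ with $\M[\pi(M')]$, which is realizable over $k$ by construction.

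The right inclusion is immediate, since if every cell of $\Delta_v$ is a matroid polytope then $\Delta_v$ is a matroid subdivision of $P_\M$ and the proposition of \cite{tropicalbook} recalled in Section~\ref{sec:basics} places $v$ in $\Dr_\M$. The second assertion of the proposition then follows formally: if $\Gr_\M = \Dr_\M$ every $v \in \Dr_\M$ lies in $\Gr_\M$, so the left inclusion forces every cell of every regular matroid subdivision of $P_\M$ to be the polytope of a $k$-realizable matroid, and in particular each initial matroid $\M_v$ is realizable.

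For strictness, the non-Fano example discussed just above witnesses the right inclusion being strict: its Dressian contains a ray whose induced subdivision has the Fano matroid polytope as a cell, and the Fano matroid is not realizable over $k$ when $\characteristic k \neq 2$. Strictness of the left inclusion is more delicate, and I would draw on the examples from \cite{speyerthesis} referenced in the introduction, in which every cell of $\Delta_v$ is the polytope of a realizable matroid yet $v$ violates a non-three-term Pl\"ucker relation and so does not lie in $\Gr_\M$. The main technical point in the argument is the rescaling step of the first paragraph; once one observes that the lineality offsets produced by Proposition~\ref{prop:polytope} are precisely the column-rescaling directions in the Dressian lineality, the argument becomes mechanical.
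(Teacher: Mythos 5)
Your proof is correct and follows the same skeleton as the paper's: the left inclusion is established via Lemma~\ref{lem:resclass} and Proposition~\ref{prop:polytope}, the right inclusion and the $\Gr_\M = \Dr_\M$ corollary are formal, and strictness is witnessed by the non-Fano example (for the right inclusion) and the examples from \cite{speyerthesis} (for the left). The one place you diverge in substance is how you handle the step from $\M_v$ to the other cells $\M_w$, $w \in v + L$: the paper leaves implicit that $w$ is again in $\Gr_\M$ (since $L \subseteq \Lineality(\Gr_\M)$) and that Lemma~\ref{lem:resclass} therefore reapplies, whereas you re-derive the membership concretely by rescaling columns of the realizing matrix $M$ and then invoke the lemma on the new matrix $M'$. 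Your version is more self-contained and also flags the value-group approximation needed to land $w$ in $\Gamma^{|\B|}$ while preserving $\M_w$ --- a point the paper glosses over entirely. Both the paper and you, however, tacitly assume the original $v \in \Gr_\M$ lifts to a Pl\"ucker vector over $K$; the Fundamental Theorem literally gives this only for $v \in \Gamma^{|\B|}$, so a genuinely watertight argument would approximate $v$ itself (or observe that $\Gr_\M$ is a $\Gamma$-rational polyhedral complex and the subdivision $\Delta_v$ is constant on relative interiors of its cells). This is a shared, minor omission rather than a flaw specific to your write-up.
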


\begin{proof}
Let $v \in \Gr_\M$. By Lemma~\ref{lem:resclass}, the initial matroid $\M_v$ is realizable. By Proposition~\ref{prop:polytope}, $P_{\M_v}$ is a cell of the regular matroid subdivision induced by $v$, and all cells arise in this way.

%While the Dressian gives all weight vectors which induce regular matroid subdivisions of the matroid polytope of a matroid, all of the points of the Grassmannian give weight vectors which induce regular matroid subdivisions of the matroid polytope which are realizable over the given field.
%However, we do not get equality in the above statement; 
There are indeed examples of regular matroid subdivisions where all cells correspond to realizable matroids, but a weight vector inducing them is not necessarily contained in the Grassmannian. In his thesis \cite{speyerthesis}, the second author gives two examples of this behavior. 
Example 4.5.6 of \cite{speyerthesis} gives two matroids of rank 3 on 12 elements which are both cells of a regular matroid subdivision of $U(3,12)$ such that the cross ratios of four of the points 5,6,7, and 8 are designed to be two different values. Therefore any weight vector inducing this subdivision cannot be contained in the Grassmannian. 
Example 4.5.8  gives examples of two weight vectors inducing the same subdivision, where one is contained in the Grassmannian and the other is not.
\end{proof}

%\begin{theorem}
%\label{thm:discrimination}
%Let $\M$ be a matroid and $K$ be an algebraically closed field with nontrivial valuation $\val_K$ and uniformizer $t$ such that $k \hookrightarrow K$. Then the following are equivalent:
%\begin{enumerate}
%\item \label{thing1} $\Gr_\M = \Dr_\M$,
%\item \label{thing2} No regular matroidal subdivision of the matroid polytope $P_\M$ contains a cell which is the matroid polytope of a non-realizable matroid,
%\item \label{thing3} All initial matroids of $M$ are realizable.
%\end{enumerate}
%\end{theorem}
%\begin{proof}
%That \ref{thing1} is equivalent to \ref{thing2} is is a corollary of Theorem~\ref{prop:realizability}.
%By Proposition~\ref{prop:polytope}, matroids of polytopes in matroidal subdivisions of $P_\M$ are exactly the initial matroids of $\M$, so \ref{thing2} and \ref{thing3} are also equivalent.
%\end{proof}

%%%%%%%%%%%%%%%%%%%%%%%%%%%%%%%%%%%%%%%%%%%%%%%%%%%%%%%
\section{Linearity and lineality spaces of Dressians} \label{sec:algorithms}

In this section, we study linearity and lineality spaces of Dressians. Since Dressians are tropical prevarieties, they can be computed using software (like \texttt{Gfan} \cite{gfan}). However, these computations become unfeasible for inputs with many polynomials or variables.
In this section, we explain how to reduce these computations to have fewer variables, using linearity. We give an algorithm to carry out this reduction, which we use in the computations in the remainder of the paper. 

 This algorithm is best for computing Dressians of matroids with many non-bases, as it takes advantage of the binomials that the non-bases introduce. The algorithm will not speed up the computation of Dressians of uniform matroids. Fast algorithms for this can be found in \cite{tropplanes,hjs}. Fast algorithms for computing prevarieties in general can be found in \cite{prevarietis}. In \cite[Section 6]{tropplanes}, the authors compute the Dressian of the Pappus matroid, but there is no description of how their computation was performed.

We begin by discussing generalities about reducing the dimension of fans. Let $V$ be a real vector space and let $\Sigma$ be a nonempty fan in $V$. The \emph{linearity space} $\Linearity(\Sigma)$ is the subspace of $V$ spanned by the cones of $\Sigma$ and the \emph{lineality space} $\Lineality(\Sigma)$ is the largest subspace of $V$ which is contained in every maximal cone of $\Sigma$. So $\Sigma$ can be considered as a fan in its linearity space, and that fan is the preimage of a fan in the subquotient space $\Linearity(\Sigma) / \Lineality(\Sigma)$. 
We can reduce dimensions by working in this subquotient. 
As we will now explain, both the linearity and the lineality spaces of $\Dr_M$ and $\Gr_M$ were considered under different names by Dress and Wenzel.

Let $M$ be a matroid with ground set $[n]$ and basis set $\B$. 
Dress and Wenzel~\cite{DW1} introduce an abelian group $\TT^{\B}_M$, defined as follows: the generators of $\TT^{\B}_M$ are called $\epsilon$ and $X(a_1, \ldots, a_k)$, where $(a_1, \ldots, a_k)$ is an ordered basis of $M$. The relations are
\[ \begin{array}{lcll}
\epsilon^2 &=& 1 \\
X(a_{\sigma(1)}, \ldots, a_{\sigma(k)}) &=& X(a_1, a_2, \ldots, a_k) & \mbox{$\sigma$ an even permutation} \\
X(a_{\sigma(1)}, \ldots, a_{\sigma(k)}) &=& \epsilon X(a_1, a_2, \ldots, a_k) & \mbox{$\sigma$ an odd permutation} \\
\end{array}  \]
and that, for matroid elements $a_1$, $a_2$, \ldots, $a_{k-2}$, $b_1$, $b_2$, $c_1$, $c_2$, if the four sets $\{ a_1, a_2, \ldots, b_i, c_j \}$ are bases but $\{ a_1, a_2, \ldots, a_{k-2}, b_1, b_2 \}$ is not a basis, then
\begin{equation} X(a_1, \ldots, a_{k-2}, b_1, c_1) \ X(a_1, \ldots, a_{k-2}, b_2, c_2) = X(a_1, \ldots, a_{k-2}, b_1, c_2)\ X(a_1, \ldots, a_{k-2}, b_2, c_1) . \label{TutteRelation} \end{equation}

Define a map $\Phi : \TT_M^{\B} \longrightarrow \ZZ^n$ by $\Phi(\epsilon) = 0$ and $\Phi(X(a_1, \ldots, a_k)) = \sum_{j=1}^k e_{a_j}$, where $e_1$, $e_2$, \dots, $e_n$ is the standard basis of $\ZZ^n$. The kernel of this map is denoted $\TT_M^0$. 

\begin{theorem}
The linearity space $\Linearity(\Dr_M)$  is naturally contained in $\Hom(\TT_M^{\B}, \RR)$.  If $M$ is realizable, the same is true of $\Linearity(\Gr_M)$. The lineality space $\Lineality(\Dr_M)$ is the image of the map $\Phi^{\ast} : \RR^n \longrightarrow \Hom(\TT_M^{\B}, \RR)$ induced by the map $\Phi :\TT_M^{\B} \longrightarrow \ZZ^n$ ; if $M$ is realizable, the same is true for $\Lineality(\Gr_M)$.
\end{theorem}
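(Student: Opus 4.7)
The plan is to work through the four claims in order. The key preliminary is to realize $\Hom(\TT_M^{\B}, \RR)$ explicitly as a linear subspace of $\RR^{\B}$. Since $(\RR,+)$ has no $2$-torsion, every homomorphism $\phi : \TT_M^{\B} \to \RR$ sends $\epsilon$ to $0$, and combined with the permutation-sign relations this means $\phi$ is determined by the values $p_\sigma := \phi(X(\sigma))$ on unordered bases. Passing the Tutte relations~\eqref{TutteRelation} through this identification, the image of $\Hom(\TT_M^{\B}, \RR) \hookrightarrow \RR^{\B}$ is the linear subspace defined by the equations
\[
p_{Sb_1c_1} + p_{Sb_2c_2} = p_{Sb_1c_2} + p_{Sb_2c_1}
\]
indexed by tuples $(S, b_1, b_2, c_1, c_2)$ for which $\{S, b_1, b_2\}$ is a non-basis and the four sets $\{S, b_i, c_j\}$ are bases.

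For the linearity inclusion, I would next observe that each such Tutte equation is exactly the tropicalization of a binomial matroid Pl\"ucker relation in $G_M$: the three-term Pl\"ucker relation on $S \cup \{b_1, b_2, c_1, c_2\}$ loses its $p_{Sb_1b_2}$-monomial because $\{S, b_1, b_2\} \notin \B$, and the resulting binomial tropicalizes to the displayed equality. Hence every point of $\Dr_M$ (and therefore its $\RR$-span $\Linearity(\Dr_M)$) satisfies the Tutte equations, giving $\Linearity(\Dr_M) \subseteq \Hom(\TT_M^{\B}, \RR)$. The same argument applied via $\Gr_M \subseteq \Dr_M$ yields $\Linearity(\Gr_M) \subseteq \Hom(\TT_M^{\B}, \RR)$ in the realizable case.

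For the lineality statement I would first handle both containments $\Phi^*(\RR^n) \subseteq \Lineality(\Dr_M)$ and $\Phi^*(\RR^n) \subseteq \Lineality(\Gr_M)$. Given $\lambda \in \RR^n$, the vector $\Phi^*(\lambda)_\sigma = \sum_{j \in \sigma} \lambda_j$ is the restriction to the vertex set $\{e_\sigma : \sigma \in \B\}$ of $P_M$ of a globally linear functional on $\RR^n$; adding it to any $w \in \Dr_M$ shifts heights by this linear function and thus does not alter the induced regular subdivision, so matroid subdivisions stay matroid subdivisions. For the Grassmannian, column scaling a realizing matrix by $t \in (K^*)^n$ preserves $V(I_M)$ and scales each Pl\"ucker coordinate by $\prod_{j \in \sigma} t_j$; tropicalizing yields an $\RR^n$-action on $\Gr_M$ via $\Phi^*$.

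The reverse containment is the main content. Suppose $\ell$ lies in $\Lineality(\Dr_M)$, or (in the realizable case) in $\Lineality(\Gr_M)$: in either situation, for a suitable point $w$ (any $w \in \Dr_M$, respectively any $w \in \Gr_M$, which is nonempty by realizability) and every $t \in \RR$, the vector $w + t\ell$ lies in $\Dr_M$. Fix a three-term matroid Pl\"ucker relation $f \in G_M$ with monomial supports $A, B, C \in \RR^{\B}$ (a binomial relation corresponds to one absent support). The tropicalization of $f$ applied to $w + t\ell$ demands that the minimum of $w_A + t\ell_A$, $w_B + t\ell_B$, $w_C + t\ell_C$ be attained at least twice for all $t \in \RR$; examining $t \to \pm\infty$ forces $\ell_A = \ell_B = \ell_C$, because otherwise whichever monomial has the strictly smallest $\ell$-value becomes the unique minimum for $t$ large enough. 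The system of these equalities, one per three-term relation, cuts out a linear subspace of $\RR^{\B}$, which a combinatorial argument using basis exchange (plus the product reduction $\Dr_{M_1 \oplus M_2} = \Dr_{M_1} \times \Dr_{M_2}$ to reduce to connected matroids) identifies with $\Phi^*(\RR^n)$. The principal obstacle I expect is precisely this last combinatorial identification---handling binomial and trinomial three-term relations uniformly and accounting for the possibly nontrivial kernel of $\Phi^*$ when $M$ is disconnected.
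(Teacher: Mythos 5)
Your overall plan is sound, and several of your moves are genuinely cleaner than what the paper does. In particular, your observation that each Tutte relation~\eqref{TutteRelation} is \emph{exactly} the tropicalization of a binomial matroid Pl\"ucker relation (the monomial $p_{Sb_1b_2}p_{Sc_1c_2}$ having been killed because $Sb_1b_2 \notin \B$) is a more algebraic route to $\Linearity(\Dr_M) \subseteq \Hom(\TT_M^{\B}, \RR)$ than the paper's geometric argument, which instead notes that the intersection of the relevant octahedron with $P_M$ is a square or a square pyramid on which $p$ must restrict linearly. Both arguments are correct; yours eliminates a geometric step. Your column-scaling argument for $\Phi^{\ast}\RR^n \subseteq \Lineality(\Gr_M)$ is also a nice concrete realization of what the paper states more tersely. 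And the ``$t \to \pm\infty$'' argument for the reverse lineality containment is a valid substitute for the paper's version, which instead observes that $\ell \in \Lineality(\Dr_M)$ forces both $\ell$ and $-\ell$ to lie in $\Dr_M$, so a strict inequality $\ell_A > \ell_B = \ell_C$ in some tropicalized Pl\"{u}cker relation would flip sign and violate the relation for $-\ell$. Both of these arguments reach the same intermediate conclusion: if $\ell \in \Lineality(\Dr_M)$, then $\ell$ satisfies $\ell_A = \ell_B = \ell_C$ for every three-term matroid Pl\"ucker relation, i.e., $\ell$ induces the trivial subdivision of every octahedron contained in $P_M$.

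The genuine gap is exactly the ``combinatorial identification'' you flag at the end: showing that this constraint system cuts out precisely $\Phi^{\ast}\RR^n$. This is not a routine basis-exchange computation; it is the statement that a weight vector on the bases of $M$ that is linear on every octahedron in $P_M$ must extend to a globally linear function on $P_M$. The paper closes this by citing~\cite[Corollary 16]{martapaper}, which says that a tropical Pl\"ucker vector which does not extend to a linear function on $P_M$ induces a nontrivial subdivision of some octahedron in $P_M$; combined with the intermediate conclusion above, this yields $\Lineality(\Dr_M) \subseteq \Phi^{\ast}\RR^n$. Without that citation or an independent proof, your argument stops at a system of linear equations and does not identify it with $\Phi^{\ast}\RR^n$. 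Note also that the worry you raise about the kernel of $\Phi^{\ast}$ for disconnected matroids is a red herring: the claim asserts only that $\Lineality(\Dr_M)$ equals the \emph{image} of $\Phi^{\ast}$, so injectivity of $\Phi^{\ast}$ is irrelevant, and the reduction to connected matroids via $\Dr_{M_1 \oplus M_2} = \Dr_{M_1} \times \Dr_{M_2}$ is unnecessary.
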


Thus, the subquotients $\Linearity(\Dr_M)/\Lineality(\Dr_M)$ and  $\Linearity(\Gr_M)/\Lineality(\Gr_M)$ are subspaces of $\Hom(\TT_M^0, \RR)$.
We caution the reader that we write the group $\TT_M^{\B}$ multiplicatively (following Dress and Wenzel) but write the additive group of $\RR$ additively.

\begin{proof}
We first check that $\Linearity(\Dr_M)$ embeds in $\Hom(\TT_M^{\B}, \RR)$. Since $\epsilon$ is torsion, every group homorphism from $\TT_M^{\B}$ to $\RR$ sends $\epsilon$ to $0$, and we may pass to the quotient of $\TT_M^{\B}$ by $\langle \epsilon \rangle$. 
In this quotient, the element $X(a_1, \ldots, a_k)$ only depends on the unordered basis $\{ a_1, \ldots, a_k \}$, so we can think of $\Hom(\TT_M^{\B}, \RR)$ as a subspace of $\RR^{\B}$. 

Thus, we need to check that, if $( p_{a_1 \cdots a_k} )_{\{ a_1, \ldots, a_k \} \in \B}$ is a point of $\Dr_M$, and if $a_1$, $a_2$, \ldots, $a_{k-2}$, $b_1$, $b_2$, $c_1$, $c_2$ are as in Equation~\ref{TutteRelation}, we must have
\[ p_{a_1 a_2 \cdots a_{k-2} b_1 c_1} + p_{a_1 a_2 \cdots a_{k-2} b_2 c_2}  =  p_{a_1 a_2 \cdots a_{k-2} b_1 c_2} + p_{a_1 a_2 \cdots a_{k-2} b_2 c_1}. \]
Indeed, consider the octahedron whose vertices are indexed by sets of the form $a_1 a_2 \cdots a_{k-2} x y$ where $\{ x, y \} \subset \{ b_1, b_2, c_1, c_2 \}$. The intersection of this octahedron with the matroid polytope $P_M$ must be either a square pyramid or a square, with the vertices $\sum_{j=1}^{k-2} e_{a_j} + e_{b_r} + e_{c_s}$ making up the vertices of the square, so in either case, $p$ must be linear on this square. 	
This completes the check that $\Hom(\TT_M^{\B}, \RR)$ is a subspace of $\Linearity (\Dr_M)$. If $M$ is realizable, then $\Lineality(\Gr_M) \subseteq \Linearity(\Dr_M)$, so the same is true for the Grassmannian.
%
%Next, we must check that $\Phi^{\ast} \RR^n$ is a subspace of $\Lineality(\Dr_M)$.
%Indeed, let $(t_1, \ldots, t_n) \in \RR^n$ and let $p \in \Dr_M$. Then $(\Phi^{\ast}(t_1, \ldots, t_n) + p)_{a_1 \ldots a_k} = p_{a_1 \ldots a_k} + \sum_{j=1}^k t_{a_j}$. In other words, $\Phi^{\ast}(t_1, \ldots, t_n) + p$ differs from $p$ by a global linear function on $P_M$, and thus $\Phi^{\ast}(t_1, \ldots, t_n) + p$ is also in $\Dr_M$. The same argument applies if we replace $\Dr_M$ with $\Gr_M$.

We now show that $\Lineality(\Dr_M) = \Phi^{\ast} \RR^n$. This statement, for the Dressian, appears as Corollary~18 in~\cite{martapaper}, but it appears to us that this reference only checks that $\Phi^{\ast} \RR^n \subseteq \Lineality(\Dr_M)$. We first explain how to check that $\Phi^{\ast} \RR^n \subseteq \Lineality(\Dr_M)$, $\Lineality(\Gr_M)$, and then check the converse. Indeed, let $(t_1, \ldots, t_n) \in \RR^n$ and let $p \in \Dr_M$. Then $(\Phi^{\ast}(t_1, \ldots, t_n) + p)_{a_1 \ldots a_k} = p_{a_1 \ldots a_k} + \sum_{j=1}^k t_{a_j}$. In other words, $\Phi^{\ast}(t_1, \ldots, t_n) + p$ differs from $p$ by a global linear function on $P_M$, and thus $\Phi^{\ast}(t_1, \ldots, t_n) + p$ is also in $\Dr_M$. The same argument applies if we replace $\Dr_M$ with $\Gr_M$.

We now prove the reverse containment: If $\Sigma_1 \subseteq \Sigma_2$ are two nonempty fans, then $\Lineality(\Sigma_1) \subseteq \Lineality(\Sigma_2)$, so it is enough to check the claim for the Dressian. Moreover, $\Lineality(\Sigma) \subseteq \Sigma \cap (- \Sigma)$ for any nonempty fan $\Sigma$, so it is enough to check that $\Dr_M \cap (- \Dr_M) \subseteq \Phi^{\ast} \RR^n$. 
So, suppose that $p$ and $- p$ are both in $\Dr_M$. We want to show that $p$ extends to a linear function on the matroid polytope $P_M$. 
Suppose to the contrary that $p$ does not extend to a linear function on $P_M$. Then, by~\cite[Corollary 16]{martapaper}, $p$ induces a non-trivial subdivision of some octahedron in $P_M$; let the vertices of that octahedron be $Sab$, $Sac$, $Sad$, $Sbc$, $Sbd$ and $Scd$ and choose those labels such that
\[ p_{Sab} + p_{Scd} > p_{Sac} + p_{Sbd} = p_{Sad} + p_{Sbc}. \]
But then
\[ (-p_{Sab}) + (-p_{Scd})  <  (-p_{Sac}) + (-p_{Sbd}) = (-p_{Sad}) + (-p_{Sbc}) \]
so $-p$ is not in $\Dr_M$, a contradiction.
Again, if $M$ is realizable, we have $\Lineality(\Gr_M) \subseteq \Lineality(\Dr_M)$.
\end{proof}

\begin{remark}
We do not know of cases where $\Linearity(\Dr_M)$ is smaller than $\Hom(\TT^{\B}_M,\RR)$.  Restricting our attention to realizable matroids, we also don't know an example where $\Linearity(\Gr_M)$ is other than $\Hom(\TT^{\B}_M,\RR)$.
\end{remark}

We deduce the following corollary, which is due to Dress and Wenzel~\cite[Theorem 5.11]{dress}
\begin{corollary}
If $\TT_M^0$ is a torsion group, then $\Lineality(\Dr_M) = \Linearity(\Dr_M)$, and thus the matroid polytope $P_M$ has no matroidal subdivisions.
\end{corollary}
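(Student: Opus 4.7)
The plan is to deduce the corollary from the theorem by a short diagram-chase, after which the geometric conclusion about subdivisions follows from the characterization of points of $\Dr_M$ as inducers of matroid subdivisions of $P_M$.

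First, I would record the short exact sequence
\[ 0 \longrightarrow \TT_M^0 \longrightarrow \TT_M^{\B} \stackrel{\Phi}{\longrightarrow} N \longrightarrow 0 \]
where $N \subseteq \ZZ^n$ denotes the image of $\Phi$. Applying $\Hom(-,\RR)$ and using that $\RR$ is divisible (hence $\Hom(-,\RR)$ is exact on abelian groups) yields
\[ 0 \longrightarrow \Hom(N,\RR) \longrightarrow \Hom(\TT_M^{\B},\RR) \longrightarrow \Hom(\TT_M^0,\RR) \longrightarrow 0. \]
The hypothesis that $\TT_M^0$ is a torsion group forces $\Hom(\TT_M^0,\RR)=0$, so the inclusion $\Hom(N,\RR)\hookrightarrow \Hom(\TT_M^{\B},\RR)$ is an isomorphism. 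Since $N\subseteq \ZZ^n$, every real homomorphism on $N$ extends to $\ZZ^n$, so $\Hom(N,\RR)$ is exactly the image of $\Phi^{\ast}:\RR^n\to \Hom(\TT_M^{\B},\RR)$. Hence $\Phi^{\ast}\RR^n = \Hom(\TT_M^{\B},\RR)$.

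Now I would invoke the preceding theorem: $\Linearity(\Dr_M)\subseteq \Hom(\TT_M^{\B},\RR)$ and $\Lineality(\Dr_M)=\Phi^{\ast}\RR^n$. Combining with the identification just obtained,
\[ \Linearity(\Dr_M) \subseteq \Hom(\TT_M^{\B},\RR) = \Phi^{\ast}\RR^n = \Lineality(\Dr_M). \]
Since the reverse containment $\Lineality(\Dr_M)\subseteq \Linearity(\Dr_M)$ holds for any fan, this gives the equality $\Linearity(\Dr_M)=\Lineality(\Dr_M)$.

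Finally, for the subdivision statement: equality of linearity and lineality means $\Dr_M$ coincides with its lineality space $\Phi^{\ast}\RR^n$. But every element of the form $\Phi^{\ast}(t_1,\ldots,t_n)$ is the restriction of a global linear functional on $P_M$ (it takes the value $\sum_{j=1}^{k} t_{a_j}$ on the vertex $\sum_j e_{a_j}$), and a globally linear weight induces only the trivial regular subdivision of $P_M$. Thus every point of $\Dr_M$ induces the trivial subdivision of $P_M$, i.e., $P_M$ admits no nontrivial matroidal subdivision. The only real content is the exact-sequence step; once one is comfortable writing down the dual sequence and using divisibility of $\RR$, the rest is bookkeeping.
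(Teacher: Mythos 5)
Your proof is correct and follows essentially the same route the paper intends: the paper states, immediately after the theorem, that $\Linearity(\Dr_M)/\Lineality(\Dr_M)$ embeds into $\Hom(\TT_M^0,\RR)$, and your dualized short exact sequence $0\to\Hom(N,\RR)\to\Hom(\TT_M^{\B},\RR)\to\Hom(\TT_M^0,\RR)\to 0$ is exactly the argument behind that remark, with the torsion hypothesis then forcing $\Hom(\TT_M^0,\RR)=0$. The closing step, that $\Dr_M$ equal to its lineality space $\Phi^{\ast}\RR^n$ consists only of weights that restrict global linear functionals on $P_M$ and hence induce the trivial subdivision, is also right.
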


In particular, Dress and Wenzel~\cite[Theorem 3.6]{DW2} show that, if $\M = \mathbb{P}^n(F)$ for $n \geq 2$ and $F$ a finite field, then $\TT_M^0 \cong F^{\ast}$. Thus they deduce~\cite[Theorem 5.11]{dress}:
\begin{corollary} \label{cor:FinitePlanesRigid}
If $n \geq 2$ and $F$ is a finite field, then the matroid of $\PP^n(F)$ is rigid.
\end{corollary}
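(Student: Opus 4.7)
The plan is to derive this as a direct consequence of the preceding corollary together with the cited Dress--Wenzel computation of $\TT_M^0$ for projective planes over finite fields. There is essentially no new content to introduce: the work has already been done in setting up the machinery relating $\TT_M^0$, $\Lineality(\Dr_M)$, and $\Linearity(\Dr_M)$.

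First I would invoke Dress and Wenzel's result~\cite[Theorem 3.6]{DW2}, which tells us that for $M = \PP^n(F)$ with $n \geq 2$, the group $\TT_M^0$ is isomorphic to the multiplicative group $F^{\ast}$. Since $F$ is a finite field, $F^{\ast}$ is a finite abelian group, hence every element has finite order. Consequently $\TT_M^0$ is a torsion group.

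Next I would feed this into the preceding corollary: since $\TT_M^0$ is torsion, we get $\Lineality(\Dr_M) = \Linearity(\Dr_M)$, and so the matroid polytope $P_M$ admits no non-trivial matroidal subdivisions. By the definition of rigidity, this is exactly what it means for the matroid $\PP^n(F)$ to be rigid.

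The argument has no real obstacle, since both ingredients are already in place: the preceding corollary handles the passage from ``$\TT_M^0$ torsion'' to rigidity, and~\cite[Theorem 3.6]{DW2} identifies $\TT_M^0$ with $F^{\ast}$. The only thing to emphasize in the write-up is the elementary observation that finiteness of $F$ forces $F^{\ast}$ to be torsion, so that the hypothesis of the previous corollary is satisfied.
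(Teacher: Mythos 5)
Your proof is correct and follows exactly the same route as the paper: invoke Dress and Wenzel's identification $\TT_M^0 \cong F^{\ast}$ for $M = \PP^n(F)$, note $F^{\ast}$ is finite hence torsion, and apply the immediately preceding corollary to conclude rigidity.
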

 This establishes Conjecture~6.1 from~\cite{martapaper}.

More generally, if the dimension of the vector space $\Hom(\TT_M^0, \RR)$ is low, we expect $\Dr_M$ and $\Gr_M$ to be easy to compute. 
We now explain how our algorithm carries this out in detail.
Given a set of polynomials $G$ whose prevariety has small lineality space relative to its ambient dimension, we will modify the polynomials in $G$ to eliminate the unnecessary ambient dimensions.

\begin{lemma}
Let $K$ be a field with valuation $\val_K$. Let $G \subset K[x^{\pm 1}, y_1^{\pm 1}, \ldots, y_d^{\pm 1}, z_1^{\pm 1}, \ldots, z_k^{\pm 1}]$ be finite with only binomials and trinomials, and suppose $f \in G$ is a binomial in which $x$ has degree 1. Then there is a collection $G'\subset K[y_1^{\pm 1}, \ldots, y_d^{\pm 1}, z_1^{\pm 1}, \ldots, z_k^{\pm 1}]$ such that from the tropical prevariety defined by the $G'$ one can recover the tropical prevariety defined by the $G$. 
\label{lem:linearity}
\end{lemma}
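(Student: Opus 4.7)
The plan is to use the binomial $f$ to solve for $x$ as a Laurent monomial in the remaining variables, then define $G'$ by substituting this expression into every other generator of $G$. Since $f$ is a binomial in which $x$ has degree $1$, I can write $f = c_1 x \mu_1 + c_2 \mu_2$ with $c_1, c_2 \in K^{*}$ and $\mu_1, \mu_2$ Laurent monomials in $y_1, \ldots, y_d, z_1, \ldots, z_k$. Setting $f = 0$ gives $x = M$, where $M := -(c_2/c_1)\mu_2/\mu_1$; correspondingly, $\trop(V(f))$ is the graph
\[
w_x \;=\; \val_K(c_2/c_1) + \trop(\mu_2/\mu_1)(w_y, w_z)
\]
over the $(w_y, w_z)$-coordinates. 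I then set $G' := \{\, g(M, y, z) : g \in G \setminus \{f\}\,\}$, each $g'$ being a Laurent polynomial (after combining like terms) with at most as many monomial terms as $g$, hence at most three.

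The central claim to establish is that the graph map
\[
\iota : (w_y, w_z) \;\longmapsto\; \bigl(\val_K(c_2/c_1) + \trop(\mu_2/\mu_1)(w_y, w_z),\, w_y, w_z\bigr)
\]
is a bijection between the tropical prevariety of $G'$ and the tropical prevariety of $G$. The proof reduces to the observation that tropicalization commutes with monomial substitution termwise: if $g = \sum_i t_i$ is a decomposition into monomial terms and $t_i' := t_i|_{x=M}$, then whenever $w_x = \trop(M)(w_y, w_z)$ one has $\val_K(t_i) = \val_K(t_i')$ for every $i$. Hence the condition that the minimum among the $\val_K(t_i)$ is attained at least twice is equivalent to the same condition on the $\val_K(t_i')$, which is exactly the statement that $(w_y, w_z) \in \trop(V(g'))$. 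Applying this to every $g \in G \setminus \{f\}$ and combining with the graph description of $\trop(V(f))$ gives the claimed bijection, so the tropical prevariety of $G$ is recovered from that of $G'$ by appending the $w_x$-coordinate determined by $f$.

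The main obstacle I anticipate is the case in which substitution causes two monomial terms of a trinomial $g$ to collapse to the same monomial in $g'$. Then the combined coefficient in $g'$ may or may not vanish, which formally changes the number of terms of $g'$. A short case analysis is needed to confirm that the bijection above still holds: when two terms collapse, their valuations were already equal under the constraint $w_x = \trop(M)(w_y, w_z)$, so the tropical condition on the combined polynomial matches the original trinomial's tropical condition on that slice. Tracking these collapses — and verifying that they remain compatible with the ``minimum attained twice'' condition — is the one delicate piece of the argument, but it is purely combinatorial.
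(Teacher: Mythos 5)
Your high-level strategy — use the binomial $f$ to write $x$ as a Laurent monomial in the remaining variables, substitute into the other generators, and present $\trop(V(G))$ as a graph over the tropical prevariety of the substituted polynomials — is the same approach the paper takes. Your argument for the no-collapse case is correct: termwise, $\trop(t_i)$ agrees with $\trop(t_i')$ on the slice $w_x = \trop(M)(w_y,w_z)$.

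However, the ``delicate piece'' you flag at the end is in fact a genuine gap, and the patch you propose does not close it. You claim that when two terms $t_i$, $t_j$ of $g$ become the same monomial $m$ after substitution, ``the tropical condition on the combined polynomial matches the original trinomial's tropical condition on that slice.'' This is false. On the slice, the original condition for $g$ asks that the minimum of the \emph{multiset} $\{\val_K(c_i) + \trop(m), \val_K(c_j) + \trop(m), \trop(t_\ell')\}$ be attained at least twice; when $\val_K(c_i) = \val_K(c_j)$ (as for Pl\"{u}cker coefficients $\pm 1$), the first two entries coincide, so the condition degenerates to the \emph{inequality} $\trop(m) \le \trop(t_\ell')$. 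But your $g' = (c_i + c_j)m + t_\ell'$ is a genuine binomial (when $c_i + c_j \ne 0$), and $\trop(V(g'))$ imposes the \emph{equality} $\val_K(c_i + c_j) + \trop(m) = \trop(t_\ell')$. The tropical hypersurface is defined from the monomials of $g'$ \emph{after} like terms are combined, not from the list of substituted terms of $g$, and the combined coefficient can have strictly larger valuation (indeed $\val_K(c_i+c_j) > \min\{\val_K(c_i),\val_K(c_j)\}$ is exactly the non-generic case). So your graph map is an inclusion but not a surjection onto the trop-prevariety of your $G'$; keeping the collapsed substituted polynomials over-constrains. The paper's Example~\ref{ex:caution} exhibits this concretely: $G = \{xy - zw, xy + zw\}$ has a nonempty tropical prevariety, but your $G' = \{2zw\}$ gives the empty set. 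The correct construction, as in the paper, discards any $\phi(g)$ whose term count drops under substitution, records nothing in the $3\to 1$ or $3\to 0$ cases (the constraint becomes vacuous), and records the residual linear inequality in the $3\to 2$ case rather than keeping the binomial $\phi(g)$.
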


We note that Lemma \ref{lem:linearity} can be viewed as a special case of \cite[Proposition 3.1 (a)]{th09} and \cite[Theorem 3.1]{th07}, but that the computation of Dressians introduces enough new structure that it is useful to state separately.

\begin{proof} Suppose $f = x y_1^{m_1} \cdots y_d^{m_d}+c z_1^{n_1} \cdots z_k^{n_k}$. Then, the tropical hypersurface of $f$ is defined by the equation
$$
x = \val_K(c)+n_1 z_1 + \cdots + n_k z_k - m_1 y_1 - \cdots - m_d y_d .
$$
This equation defines a classical hyperplane, which introduces linearity in to the tropical prevariety defined by the $G$. To obtain $G'$, we substitute $x = -c -z_1^{n_1} \cdots z_k^{n_k} (y_1^{m_1} \cdots y_d^{m_d})^{-1}$ in every equation where $x$ appears in $G$, and the following 3 situations can arise. Let $g \in G$, and denote the substitution map from  $K[x^{\pm 1}, y_1^{\pm 1}, \ldots, y_d^{\pm 1}, z_1^{\pm 1}, \ldots, z_k^{\pm 1}] \rightarrow K[y_1^{\pm 1}, \ldots, y_d^{\pm 1}, z_1^{\pm 1}, \ldots, z_k^{\pm 1}]$ by $\phi$. For any polynomial $g$, let $t(g)$ be the number of terms of $g$.
\begin{enumerate}
    \item If $t(g) = t(\phi(g))$, then we add $\phi(g)$ to $G'$.
    \item \label{case:delete} If $t(\phi(g)) < 2$, then $\trop(g)$ asserts that the minimum of two or more identical linear forms is attained twice, so we do not add $\phi(g)$ to $G'$. See Example \ref{ex:caution}.
    \item
    \label{inequalities}
    If $t(g) = 3$ and $t(\phi(g)) = 2$, then tropically this asserts an inequality. We do not add $\phi(g)$ to $G'$, but we record this inequality.
\end{enumerate}
Then, the tropical prevariety defined by the $G'$ and intersected with any inequalities arising from (\ref{inequalities}) is the projection of $\Gr_\M$ onto the $(y_1,\ldots,y_d,z_1,\ldots,z_k)$ plane. Indeed, for any point $w'$ in the tropical prevariety of $G'$, we can recover a point $w$ in the tropical prevariety of $G$ by adding the coordinate $x = v(c)+n_1 z_1 + \cdots + n_k z_k - m_1 y_1 - \cdots - m_d y_d$. 
\end{proof}

In particular, given distinct matroid elements $a_1$, $a_2$, \ldots, $a_{k-2}$, $b_1$, $b_2$, $c_1$, $c_2$, if the four sets $\{ a_1, a_2, \ldots, b_i, c_j \}$ are bases but $\{ a_1, a_2, \ldots, a_{k-2}, b_1, b_2 \}$ is not a basis, we obtain the binomial relation in Equation~\ref{TutteRelation} to which we may apply this lemma.
We summarize what we have done above in Theorem \ref{thm:reduction}. 

\begin{theorem}
\label{thm:reduction} Let $\M$ be a matroid and let $G_\M$ be the identified generators of its matroid Pl\"{u}cker ideal coming from setting the variables indexing non-bases in the three term Pl\"{u}cker relations to 0. Then Algorithm~\ref{algo:reduction} produces a set of generators $G'$ and linear inequalities $L'$ in fewer variables such that the tropical prevariety (or variety) defined by the polynomials $G'$ intersected with the constraints in $L'$ is isomorphic as a polyhedral complex via a linear map to $\Dr_{\M}$ (or $\Gr(\M)$). 
\end{theorem}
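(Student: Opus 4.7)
The plan is to prove Theorem~\ref{thm:reduction} by iterating Lemma~\ref{lem:linearity}, doing induction on the number of variables eliminated. Algorithm~\ref{algo:reduction} scans the current generating set for a binomial in which some variable $x$ occurs with degree one, invokes Lemma~\ref{lem:linearity} to solve for $x$ and substitute into every other polynomial, and sorts the resulting expressions into a reduced generating set, a list of linear inequalities $L'$, and tautologies to be discarded. The binomial generators of $I_\M$ arise from the Tutte relations in Equation~(\ref{TutteRelation}), and each such binomial has the form $p_{Sb_1c_1} p_{Sb_2c_2} - p_{Sb_1c_2} p_{Sb_2c_1}$, in which every variable occurs with degree exactly one; hence the hypothesis of Lemma~\ref{lem:linearity} is always available. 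Because each iteration strictly reduces the variable count, the process terminates.

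For the inductive step I would maintain the invariant that after $r$ iterations there is a system $(G_r, L_r)$ on a subspace $V_r$ of dimension $|\B| - r$, together with a surjective linear map $\pi_r \colon \mathbb{R}^{|\B|} \to V_r$ (coordinate projection onto the surviving variables) whose restriction is a polyhedral-complex isomorphism from $\Dr_\M$ onto the tropical prevariety of $G_r$ intersected with the half-spaces of $L_r$. Lemma~\ref{lem:linearity} gives the inverse explicitly as an affine formula for the eliminated coordinate, so both $\pi_r$ and its inverse are restrictions of linear maps of ambient spaces; composing transports the invariant from step $r$ to step $r+1$. The Grassmannian case runs in parallel because the substitution $x = -c\, z^{n} y^{-m}$ is a monomial change of variables on the algebraic torus, which tropicalizes to the same linear map and takes $\Gr_\M$ to the variety of $G'$ inside the half-spaces $L'$.

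The delicate point, and what I would treat most carefully, is the case analysis in Lemma~\ref{lem:linearity} and its propagation through iteration. In case~(\ref{case:delete}), a substitution produces fewer than two terms; the original polynomial then tropicalizes to a tautology on the pullback of the new prevariety, so discarding it is harmless. In case~(\ref{inequalities}), a trinomial becomes a binomial after substitution, and the tropical content of the cancelled third term is recorded as the half-space added to $L'$. The main obstacle is to certify that the accumulated inequalities together with the reduced generators cut out precisely the projected Dressian, rather than something strictly larger. This follows because at each iteration the step is a bijective piecewise-linear map on the tropical prevariety, and because both inequalities and generators transform linearly under all subsequent substitutions, so the invariant that $\pi_r$ is a polyhedral-complex isomorphism onto its image is preserved throughout.
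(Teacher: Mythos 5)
Your proposal is correct and takes essentially the same approach as the paper: the paper itself gives no explicit proof of Theorem~\ref{thm:reduction} beyond the sentence ``We summarize what we have done above,'' and the intended argument is exactly the iteration of Lemma~\ref{lem:linearity} that you carry out, with the substitution viewed as a monomial change of coordinates on the torus for the Grassmannian case and the binomials identified with the Tutte relations of Equation~(\ref{TutteRelation}). Your inductive invariant that the projection is a linear polyhedral-complex isomorphism onto the prevariety of $G_r$ cut by the half-spaces of $L_r$, and your handling of cases~(\ref{case:delete}) and~(\ref{inequalities}), match the paper's Lemma~\ref{lem:linearity} and Example~\ref{ex:caution}.
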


In practice, this is quite a useful trick when computing Dressians of matroids, because it replaces a Laurent polynomial ring in $|\B|$ generators with a Laurent polynomial ring in $\mathrm{Rank}\ \TT_M^{\B}$ generators.
 In the examples in Section \ref{sec:examples}, we will point out the numbers of variables and equations before and after this algorithm is applied. The reduction is implemented in \emph{Mathematica} and can be downloaded at \url{https://github.com/madelinevbrandt/dressians}.

\begin{algorithm}
\caption{Equation reduction for Matroid Pl\"{u}cker Ideals}
\label{algo:reduction}

\begin{algorithmic} % enter the algorithmic environment
    \REQUIRE $\M$ a matroid.
    
    \ENSURE $G'$ and $L'$ as in Theorem~\ref{thm:reduction}.
    
    \STATE Compute $G_{\mathcal{M}}$.
    \STATE Let $B$ be the binomials in $G_\M$ having a variable of degree 1.
    \STATE Let $G_{old} = G_\M$.
    \STATE Let $L' = \{\}.$
    
    \WHILE{$|B|>0$}
    \STATE Let $G' = \{\}.$
    \STATE Pick $f \in B$, and a variable $x$ which has degree 1 in $f$. Then we may write $x = m$ for some monomial $m$.
\FOR{$g \in G_{old}$}
\STATE Replace $x$ by $m$ in $g$ to obtain $g'$.
\IF{$t(g) = t(g')$}
\STATE Add $g'$ to $G'.$
\ELSE
\IF{$t(g') = 2$ and $t(g) = 3$}
\STATE{Add the corresponding inequality to $L'$.}
\ENDIF
\ENDIF
\ENDFOR
\STATE Set $B$ to be the binomials in $G'$ having a variable of degree 1.
\STATE Set $G_{old} = G'.$
    \ENDWHILE
\end{algorithmic}

\end{algorithm}

\begin{example}
\label{ex:caution}
We will now give an example that illustrates case $(\ref{case:delete})$ from the proof of Lemma~\ref{lem:linearity}.
Consider the ideal $I \subset K[x^{\pm 1},y^{\pm 1},z^{\pm 1},w^{\pm 1}],$ with generating set $G =\{x y - z w, x y + z w\}$. Then $I$ is the unit ideal, but if we wish to apply Lemma~\ref{lem:linearity} to compute the tropical prevariety, the following happens. We replace $x$ by $z w / y$. The polynomials of $G$, after substitution, are $\{0, 2 z w\}$. This gives an empty prevariety. However, if we instead substitute $x = z + w - y$ in to $\trop(x y + z w)$, we obtain the condition that $\min(z + w, z+w)$ is attained twice for both equations. This is a vacuous constraint, and so any point $(y,z,w)$ can be lifted to a point in the tropical prevariety. This is why we remove polynomials from $G$ which, after substitution, have fewer than two terms. This exact issue appears often when computing Dressians of non-realizable matroids, and never occurs for realizable matroids (because there is no monomial in $I$).
\end{example}

\section{Examples}
\label{sec:examples}

In this section we compute Dressians for interesting matroids analogous to the computation done in \cite[Section 5]{tropplanes} for the Pappus matroid. Every example in this section was computed by first reducing the polynomials via Algorithm \ref{algo:reduction} using the \emph{Mathematica} code provided at \url{https://github.com/madelinevbrandt/dressians}. Then, the output is used to compute the tropical prevariety in \texttt{gfan} \cite{gfan} using the command \texttt{tropicalintersection}.

\subsection{The Star $10_3$}
Consider the rank 3 matroid $\M_\star$ on $\{0,1,\ldots,9\}$ with nonbases given by
$$
\binom{10}{3} \backslash \mathcal{B}_{\M_\star} = \{026,039,058,173,145,169,248,257,368,479\}.
$$
A realization of this matroid is depicted in Figure~\ref{fig:star}. It is a $10_3$ configuration, meaning that it is a configuration of 10 points and 10 lines in the plane such that each point is contained in 3 lines and each line contains 3 points. Up to isomorphism, there are ten such configurations \cite[Table 2.2.7]{grunbaum}. In this table, $\M_\star$ is configuration $(10_3)_3$. The $10_3$ configurations were first determined by Kantor \cite{kantor}. Among them is the Desargues configuration, $(10_3)_1$, which we discuss in Section~\ref{sec:desargues}. The configuration $\M_
\star$ is astral and has orbit type $[2,2]$, meaning that under the action of its symmetry group there are two orbits of points and two orbits of lines, and this is the minimal number that an $n_3$ configuration may have \cite{grunbaum}.

\begin{figure}[ht]
    \centering
    \includegraphics[height=1.8in]{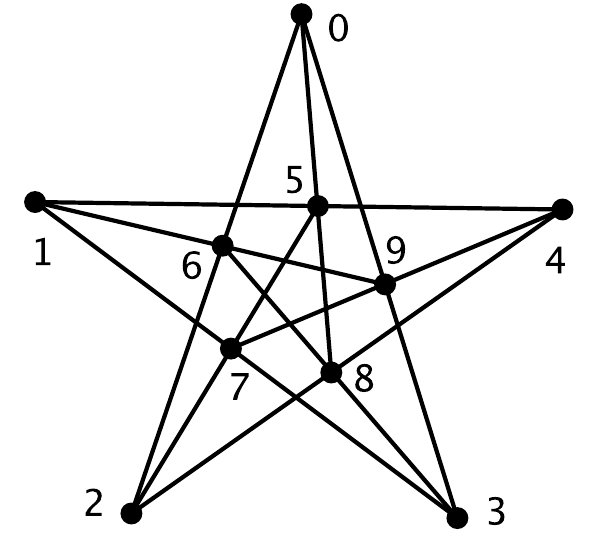}
    \caption{The star matroid $\M_\star$.}
    \label{fig:star}
\end{figure}

\begin{proposition}
Modulo lineality and intersecting with a sphere, the Dressian $\Dr_{\M_\star}$ is a 2 dimensional polyhedral complex with 30 vertices, 65 edges, and 20 triangles. It is depicted in Figure~\ref{fig:star_grass}.
In characteristic 0, the Grassmannian $\Gr_{\M_\star}$ is a graph with 30 vertices and 55 edges. It is depicted in Figure~\ref{fig:star_grass} in the darker color.
\end{proposition}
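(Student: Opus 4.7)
The proposition is a computational assertion about a specific rank-$3$ matroid on $10$ elements, so the proof will proceed by reducing the problem to a mechanical computation and then interpreting the output. Here is the plan.

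First I would list the bases of $\M_\star$ (there are $\binom{10}{3}-10 = 110$ of them) and write down the set $G_{\M_\star}$ of three-term Plücker trinomials that remain nonzero after setting the ten non-basis variables to $0$. Each trinomial indexed by $S\in\binom{[10]}{1}$ and $i,j,k,l\in[10]\setminus S$ either survives as a trinomial, collapses to a binomial (when exactly one of the three products contains a non-basis), or is killed entirely. The binomials are precisely the Tutte-style relations \eqref{TutteRelation} witnessed by the ten circuits of $\M_\star$, and these are the ones that Algorithm~\ref{algo:reduction} will exploit.

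Next I would run Algorithm~\ref{algo:reduction} on $G_{\M_\star}$ via the \emph{Mathematica} implementation. Each binomial $p_{\sigma}-c\,p_{\tau}$ with a degree-one variable lets me substitute one Plücker variable in terms of another, cutting the ambient dimension by $1$ and possibly shortening some trinomials to binomials (case~(\ref{case:delete})) or to inequalities (case~(\ref{inequalities})). By Theorem~\ref{thm:reduction}, the reduced data $(G',L')$ has the property that its tropical prevariety, lifted linearly, is $\Dr_{\M_\star}$. The reduced ambient dimension equals the rank of $\TT^{\B}_{\M_\star}/\langle\epsilon\rangle$, which for this matroid should be small enough that \texttt{gfan}'s \texttt{tropicalintersection} can terminate in reasonable time.

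I would then feed $G'$ to \texttt{gfan} to obtain the Dressian as a polyhedral fan. Quotienting by the lineality space $\Phi^{\ast}\RR^{10}$ (of dimension $10$, assuming $\TT^0_{\M_\star}$ is torsion-free over $\QQ$) and intersecting with a sphere yields a $2$-dimensional spherical complex, whose $f$-vector should read $(30,65,20)$. For the Grassmannian in characteristic $0$, I would take each cone output by \texttt{gfan} and attempt to lift a relative-interior point to a genuine realization of the corresponding initial matroid over $\overline{\QQ\{\{t\}\}}$: the cones that lift form $\Gr_{\M_\star}$, and the remaining cones (expected to be the $10$ triangles corresponding to non-realizable initial matroids, plus their boundary edges that are not shared with realizable cones) get discarded.

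The main obstacle is not the \texttt{gfan} run itself but the subsequent bookkeeping: certifying which cones of $\Dr_{\M_\star}$ lie in $\Gr_{\M_\star}$ in characteristic $0$. For each maximal cone I must exhibit an explicit family of $3\times 10$ matrices over $\CC\{\{t\}\}$ whose Plücker vector valuates into the cone, and for cones left out I must produce an obstruction (for instance, a cross-ratio identity of the sort used in Proposition~\ref{prop:realizability}, analogous to the non-Fano example) showing that no lift exists. Once this case analysis is complete, the face counts $30,\,65,\,20$ and $30,\,55$ can simply be read off, and Figure~\ref{fig:star_grass} records the combinatorial picture.
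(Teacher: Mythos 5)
Your plan for computing the Dressian itself coincides with the paper's: assemble $G_{\M_\star}$ (the paper records $1260$ polynomials in $110$ variables), reduce via Algorithm~\ref{algo:reduction} to a smaller system (the paper gets $73$ polynomials in $17$ variables), run \texttt{tropicalintersection} in \texttt{gfan}, and read off the $f$-vector modulo lineality. So the first half of your argument is essentially the same computation.

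For the Grassmannian, however, your route differs from the paper's and has a gap. You propose to certify each cone individually: exhibit an explicit family of $3\times 10$ matrices over $\CC\{\{t\}\}$ whose Pl\"ucker vector lands in cones you want to keep, and produce a cross-ratio obstruction for cones you want to discard. The paper instead picks a random interior point $w$ in each cone, computes the initial ideal $In_w(I)$, and discards a cone as soon as $In_w(I)$ contains a monomial; then -- and this is the step your plan omits -- it verifies the tropical \emph{balancing condition} at every ray of the surviving $30$-vertex, $55$-edge graph. That final check does two things at once that your proposal does not: it certifies that the surviving cones really do lie in $\Gr_{\M_\star}$ without having to build explicit realizations one by one, and, crucially, it rules out the possibility that some lower-dimensional piece of $\Gr_{\M_\star}$ is hiding in the relative interior of a discarded triangle. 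The Dressian's fan structure as output by \texttt{gfan} need not refine the Gr\"obner fan of $I$, so a single point per Dressian cone -- whether a lift you construct or an obstruction you find -- does not by itself classify the entire cone. Without an argument of the balancing type (or an argument that the Dressian fan refines the Gr\"obner fan), your case analysis could miss such hidden cells and is therefore incomplete. As a small numerical slip, you also say ``the $10$ triangles'' should be discarded; there are $20$ triangles in $\Dr_{\M_\star}$, all discarded, along with $10$ of the $65$ edges.
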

\begin{proof}
Using Algorithm~\ref{algo:reduction}, we take the generators $G_{\M_\star}$ and make a new generating set whose tropical prevariety will not have linearity. Initially, we are working with 1260 polynomials in 110 variables. After applying Algorithm~\ref{algo:reduction}, we have 73 polynomials in 17 variables. Let $I$ be the ideal generated by these. Using the command \texttt{tropicalintersection} in \texttt{gfan} \cite{gfan}, we obtain the Dressian as claimed.

For each cone $\sigma$ in the Dressian we select a random $w \in \sigma$. Then, we compute $In_w(I)$. If it contains a monomial, then we conclude that the cone is not contained in $\Dr_{\star}$. Doing so demonstrates that no triangle is contained in $\Dr_{\star}$, and neither are the edges which are contained in two triangles. Lastly, to verify that everything else is contained in $\Gr_{\star}$, and to ensure that there was no lower-dimensional cell within a triangle, we check the balancing condition at each ray. We find that the balancing condition holds, and this concludes the proof.
\end{proof}

\begin{figure}[ht]
    \centering
    \includegraphics[height = 3.5 in]{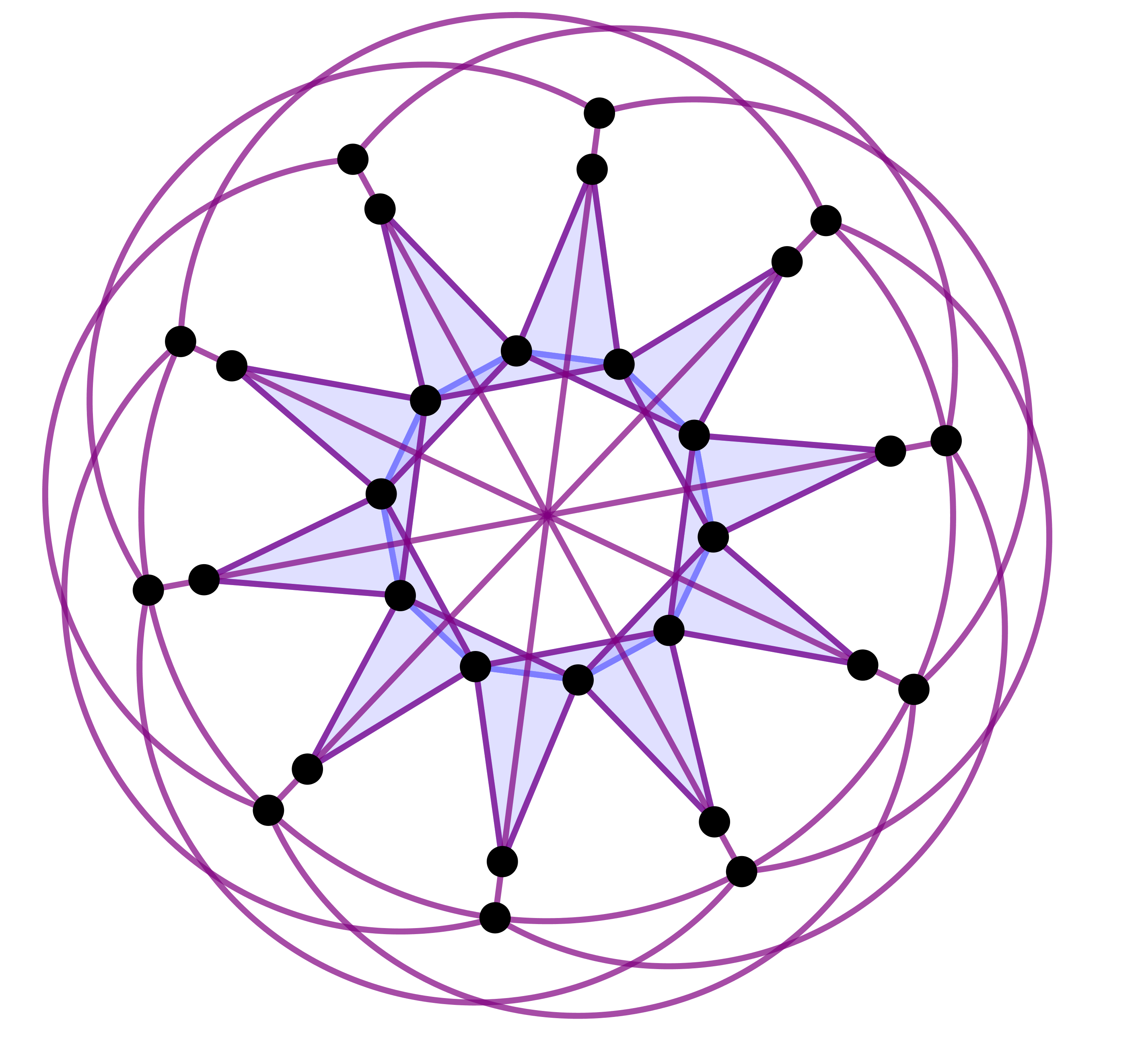}
    \caption{The Dressian and the Grassmannian of $\M_\star$. The Dressian is the full picture, and the Grassmannian is the darkened part.}
    \label{fig:star_grass}
\end{figure}

Let us study the matroid subdivisions arising from rays in the Dressian $\Dr_{\M_\star}$. They come in three tiers, with each tier containing 10 rays. 

Tier I contains the outermost rays in Figure~\ref{fig:star_grass}. Each of these rays induces a subdivision of the matroid polytope with 9 cells, where each cell has the following number of vertices:
$
\{33, 33, 33, 41, 41, 49, 57, 67, 77\}.
$
Tier II contains the middle rays in Figure~\ref{fig:star_grass}. Each of these rays induces a subdivision of the matroid polytope with 6 cells, where each cell has the following number of vertices:
$
\{33, 41, 43, 61, 68, 81\}.
$
Tier III contains the innermost rays in Figure~\ref{fig:star_grass}. Each of these rays induces a subdivision of the matroid polytope with 5 cells, where each cell has the following number of vertices:
$
\{33,33,53,53,96\}.
$
Each of the polytopes with 33 vertices arising in these subdivisions comes from setting six of the points parallel. The resulting matroids are rank 3 matroids on 5 elements with two nonbases, which intersect at a point.

%In the non-realizable edges of $\Gr_{M_\star},$ subdivisions contain the matroid in Figure~\ref{fig:badfano} which is not realizable over any field. This provides an example for Proposition~\ref{prop:realizability}, and demonstrates that these points are not in $\Gr_{\M_\star}$.

%\begin{figure}
%    \centering
%    \includegraphics[height = 1.2 in]{}
%    \caption{This is a matroid which is an initial matroid of $\M_\star$. The nonbases are indicated by gray lines and parallel elements are indicated by concentric circles. This matroid appears in subdivisions of the matroid polytope of $\M_\star$ which come from points of $\Dr_\M \backslash \Gr_\M$. It is not realizable over any field because it has the Fano matroid (which is only realizable over characteristic 2) and the uniform matroid $U_{2,4}$ (which is not realizable over characteristic 2) as minors.}
%% This is wrong because U_24 is realizable over char 2, just not F_2.
%    \label{fig:badfano}
%\end{figure}

%\quest{Can I say anything more specific about the combinatorics of which sets of 6 points are made parallel in the 33-vertex polytopes?}

%\quest{Are all of these prime cones?}

%\quest{Which edges give toric initial ideals, and what are the polytopes of those toric ideals?}

%%%%%%%%%%%%%%%%%%%%%%%%%%%%%%%%%%%%%%%%%%%%%%%%%%%%%%
\subsection{Non-Pappus}

In \cite{tropplanes} the authors study the Dressian of the Pappus matroid. They show that as a simplicial complex, it has $f$-vector $f = (18,30,1)$. In \cite[Page 213]{tropicalbook}, Exercise 23, the authors ask for the Dressian of the non-Pappus matroid $\M_{nP}$. This matroid is not realizable over any field, as this would contradict the Pappus Theorem, which says that the points 6,7,8 in Figure~\ref{fig:nonpappus} will always be collinear as long as the other collinearities hold.

\begin{figure}[ht]
    \centering
    \includegraphics[height=2.5 in]{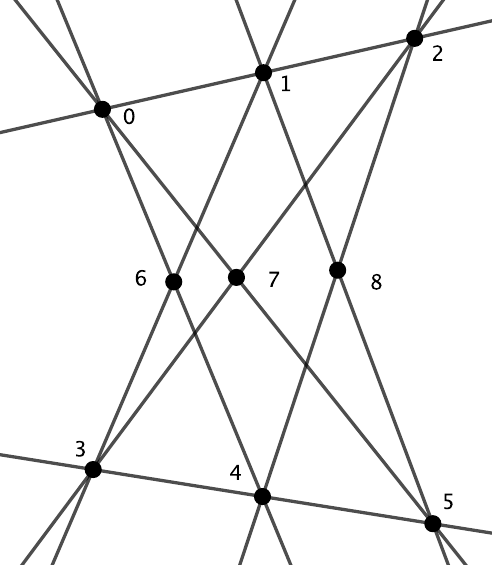}
    \caption{The non-Pappus matroid.}
    \label{fig:nonpappus}
\end{figure}

\begin{proposition}[\cite{tropicalbook}, Chapter 4, Exercise 23]
Modulo lineality and intersecting with a sphere, the Dressian $\Dr_{\M_{nP}}$ is a 3 dimensional polyhedral complex with $f$-vector (19,48,31,1).
The Grassmannian $\Gr_{\M_{nP}}$ is empty.
\end{proposition}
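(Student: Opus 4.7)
The plan is to mirror the computational strategy used for $\M_\star$ in the previous subsection, combined with a direct appeal to the Pappus theorem for the Grassmannian claim.

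For the Grassmannian, I would simply invoke non-realizability. The non-Pappus matroid has nonbases asserting that eight specific triples of points are collinear while declaring $\{6,7,8\}$ to be a basis. Over any field, the classical Pappus theorem forces $\{6,7,8\}$ to be collinear whenever the other eight collinearities hold, so $\M_{nP}$ is not realizable over any field. By the statement in Section~\ref{sec:basics} that $V(I_\M)$ is empty if and only if $\M$ is not realizable over $K$, we conclude $\Gr_{\M_{nP}} = \emptyset$.

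For the Dressian, I would carry out the following pipeline. First, compute the generating set $G_{\M_{nP}}$ of matroid Pl\"{u}cker trinomials obtained by zeroing out variables indexed by the nonbases of $\M_{nP}$. Then apply Algorithm~\ref{algo:reduction} to produce a reduced set of generators $G'$ and a list of inequalities $L'$ in the smaller variable set corresponding to $\Hom(\TT_{\M_{nP}}^0,\RR)$; by Theorem~\ref{thm:reduction}, the resulting tropical prevariety is linearly isomorphic to $\Dr_{\M_{nP}}$ modulo its lineality. Next, feed $G'$ to \texttt{gfan}'s \texttt{tropicalintersection} command to obtain the tropical prevariety, then intersect with the halfspaces coming from $L'$. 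Finally, intersect with a sphere to read off the $f$-vector $(19,48,31,1)$ and confirm three-dimensionality.

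As with $\M_\star$, care must be taken to verify that the raw \texttt{tropicalintersection} output actually equals the Dressian and is not missing lower-dimensional cells. For each maximal cone $\sigma$ of the output I would pick a random interior point $w \in \sigma$ and check that $w$ does satisfy every tropicalized Pl\"{u}cker relation (so $\sigma \subseteq \Dr_{\M_{nP}}$); then, at each ray of the complex, verify the local structure is consistent with the reported $f$-vector by checking that the link has the expected combinatorial type. Since $\Gr_{\M_{nP}} = \emptyset$, there is no need to separate a Grassmannian sub-complex inside the Dressian, which makes the verification simpler than in the $\M_\star$ case.

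The main obstacle is purely computational: Algorithm~\ref{algo:reduction} must collapse the trinomial system sufficiently so that \texttt{gfan} terminates in reasonable time, and one must trust (or independently verify) the \emph{Mathematica} and \texttt{gfan} outputs. I expect no conceptual difficulty, since the non-realizability settles the Grassmannian and the Dressian description is a finite check reducible to tropical prevariety software once the variable count has been cut down via the linearity space identified in Section~\ref{sec:algorithms}.
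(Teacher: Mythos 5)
Your proposal is correct and follows essentially the same route as the paper: non-realizability via the Pappus theorem gives $\Gr_{\M_{nP}} = \emptyset$, and the Dressian is computed by applying Algorithm~\ref{algo:reduction} to reduce the generating set and then running \texttt{tropicalintersection} in \texttt{gfan}. The extra "verify each cone satisfies the tropicalized Pl\"ucker relations" step you propose is redundant (that is exactly what \texttt{tropicalintersection} computes) and is not quite the same as the $\M_\star$ check, which was used to locate the Grassmannian inside the Dressian rather than to certify the Dressian itself, but this does not affect correctness.
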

\begin{proof}
Using Algorithm~\ref{algo:reduction}, we take the generators $G_{\M_{nP}}$ and make a new generating set whose tropical prevariety will not have linearity. 
Initially, we are working with 630 polynomials in 76 variables. After applying Algorithm~\ref{algo:reduction}, we have 171 polynomials and 29 variables. Using the command \texttt{tropicalintersection} in \texttt{gfan} \cite{gfan}, we obtain the Dressian as claimed. 
\end{proof}

The projection of this Dressian along the $p_{678}$ axis yields the Dressian of the Pappus matroid. If $f$ is the $f$-vector of the Pappus matroid, we see that $(19,48,31,1) = (f_0+1, f_1+f_0,f_2+f_1,f_2)$.

%%%%%%%%%%%%%%%%%%%%%%%%%%%%%%%%%%%%%%%%%%%%%%%%%%%%%%
\subsection{The V\'{a}mos Matroid}

The V\'{a}mos Matroid is a rank 4 matroid on 8 elements which is not realizable over any field. We depict it in Figure~\ref{fig:vamos}. All four-element subsets of the eight elements are bases except $\{0134,\ 0125,\ 2345,\ 3467,\ 2567\}$.

\begin{figure}[ht]
    \centering
    \includegraphics[height = 1.8 in]{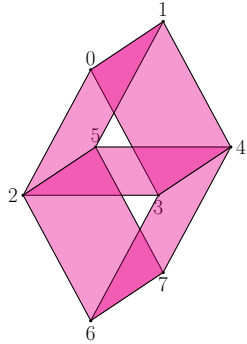}
    \caption{A depiction of the V\'{a}mos Matroid.}
    \label{fig:vamos}
\end{figure}

\begin{proposition}
\label{prop:vamos}
Modulo lineality and intersecting with a sphere, the Dressian of the V\'{a}mos Matroid is an 8 dimensional polyhedral complex with $f$-vector $$
(201, 2014, 6810, 9581, 5425, 896, 72, 18, 2).
$$
\end{proposition}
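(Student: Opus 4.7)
The plan is to follow the same computational strategy used for the preceding propositions (star $10_3$ and non-Pappus), namely: generate the three-term matroid Plücker relations $G_{\M_V}$, apply Algorithm~\ref{algo:reduction} to cut the ambient dimension down to $\mathrm{Rank}\,\TT^{\B}_{\M_V}$, and then invoke \texttt{gfan}'s \texttt{tropicalintersection} on the reduced system. The Dressian, considered modulo its lineality space, is the support of this tropical prevariety, and its $f$-vector can be read off from \texttt{gfan}'s face-lattice output. The only mathematical content beyond the computation is the justification that the reduction is lossless, which is precisely Theorem~\ref{thm:reduction}: the reduction amounts to quotienting by the binomial linear relations coming from Equation~\ref{TutteRelation}, and the resulting prevariety is linearly isomorphic to $\Dr_{\M_V}/\Lineality(\Dr_{\M_V})$.

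First I would enumerate the $\binom{8}{4}-5 = 65$ bases of $\M_V$ and form the $\binom{8}{2} \cdot 3 = 630$ three-term Plücker trinomials, deleting the variables $p_{0134}$, $p_{0125}$, $p_{2345}$, $p_{3467}$, $p_{2567}$ wherever they appear; the resulting set $G_{\M_V}$ lives in a Laurent ring in $65$ variables. Next I would run the \emph{Mathematica} implementation of Algorithm~\ref{algo:reduction}: each non-basis turns many trinomials into binomials of the form $p_{\alpha}p_{\beta} - p_{\gamma}p_{\delta}$, each of which can be used to solve for one variable in terms of the others (case~(1) of Lemma~\ref{lem:linearity}), dropping monomial-identity relations (case~(2)) and recording any resulting inequalities (case~(3)). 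Iterating until no such binomial with a degree-1 variable remains leaves a substantially smaller system $G'$ in $\mathrm{Rank}\,\TT^{\B}_{\M_V}$ variables; since $\M_V$ has $5$ non-bases contributing many binomial constraints, I expect a reduction of an order of magnitude, comparable to the non-Pappus example.

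Feeding $G'$ to \texttt{gfan tropicalintersection} then returns a polyhedral fan in the reduced ambient space, which (by Theorem~\ref{thm:reduction}) is linearly isomorphic to $\Dr_{\M_V}$ modulo its lineality space. Intersecting with the unit sphere and tabulating cells of each dimension yields the claimed $f$-vector $(201, 2014, 6810, 9581, 5425, 896, 72, 18, 2)$; the top entry $2$ reflects the two maximal cones of the Dressian. Since $\M_V$ is not realizable over any field, there is no accompanying Grassmannian computation to verify.

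The main obstacle is pure computational cost. The \texttt{tropicalintersection} algorithm is doubly exponential in the worst case, and without the variable reduction an $8$-dimensional prevariety in $65$ ambient variables would be out of reach. The effectiveness of the proof therefore rests entirely on Algorithm~\ref{algo:reduction} shrinking the system far enough that \texttt{gfan} terminates; a secondary concern is organizing the output (tens of thousands of low-dimensional faces) to extract the $f$-vector reliably, for which I would use the face-enumeration routines on the fan data structure rather than trying to read the raw polymake output by hand.
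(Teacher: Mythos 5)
Your approach matches the paper's exactly: generate the three-term matroid Pl\"ucker relations, run Algorithm~\ref{algo:reduction} to eliminate linearity, feed the reduced system to \texttt{gfan}'s \texttt{tropicalintersection}, and read off the $f$-vector (the paper reports going from $420$ polynomials in $65$ variables to $169$ polynomials in $33$ variables). One small arithmetic slip: the number of three-term Pl\"ucker trinomials for a rank-$4$ matroid on $8$ elements is $\binom{8}{2}\binom{6}{4}=420$, not $630$ (that is the count for the rank-$3$, nine-element non-Pappus case), and your displayed formula $\binom{8}{2}\cdot 3$ evaluates to neither; this does not affect the validity of the method.
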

\begin{proof}
Using Algorithm~\ref{algo:reduction}, we take the generators $G_{\M}$ and make a new generating set whose tropical prevariety will not have linearity. 
Initially, we are working with 420 polynomials in 65 variables. After applying Algorithm~\ref{algo:reduction}, we have 169 polynomials and 33 variables. Using the command \texttt{tropicalintersection} in \texttt{gfan} \cite{gfan}, we obtain the Dressian as claimed. %This computation took two days to compute in \texttt{gfan}.
\end{proof}

We now study subdivisions of the matroid polytope induced by elements of the two maximal cells. In each case, points from the interior of the cell induce a matroid subdivision which has 9 polytopes with 17 vertices and one polytope with 56 vertices. The large polytopes are the matroid polytopes of the matroids $\M_1, \M_2$ with the following two collections of 14 nonbases:
$$
\overline{\B}_0=\{\underbrace{0134,\ 0125,\ 2345,\ 3467,\  2567}_{\text{from V\'{a}mos}},\ 0167,\ 0246,\ 0356,\ 1247,\ 1357,\ 0237,\ 0457,\ 1236,\ 1456\},
$$
$$
\overline{\B}_1=\{\underbrace{0134,\ 0125,\ 2345,\ 3467,\  2567}_{\text{from V\'{a}mos}},\ 0167,\ 1246,\ 1356,\ 0236,\ 1237,\ 0456,\ 1457,\ 0247,\ 0357\}.
$$
Each of these is the collection of 12 planes in a cube, together with extra nonbases $2345$ and $0167$. The two labellings of the cube are given in Figure~\ref{fig:cubes}.  The matroid polytope of this matroid has no nontrivial matroid subdivisions. This matroid is not realizable over any field, since $\Gr_{\M_1} = \Gr_{\M_2} = \langle 1 \rangle$.
The other 9 polytopes in the subdivision each correspond to one of the above 9 new nonbases. They are each matroids in which all of the elements of the corresponding nonbasis have been parallelized in the V\'{a}mos matroid.

\begin{figure}[ht]
    \centering
    \includegraphics[height = 1.3 in]{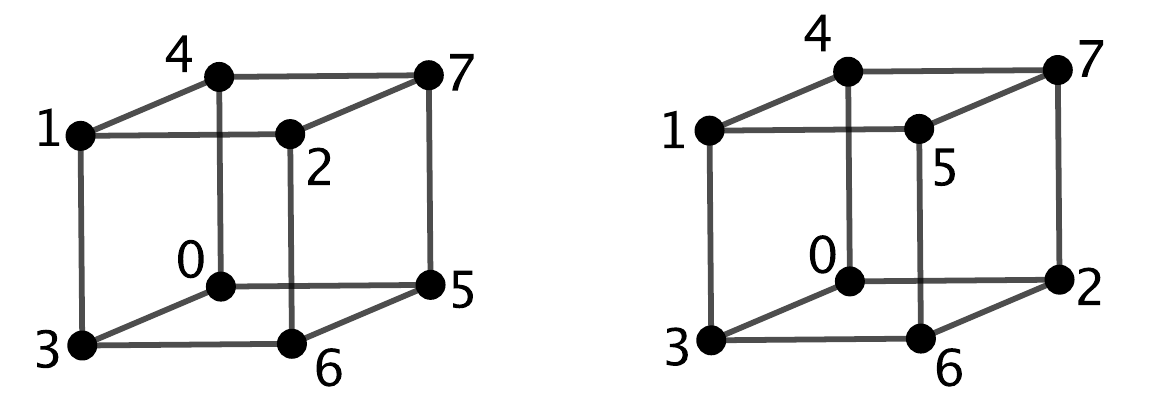}
    \caption{The two cubes whose planes give 12 of the 14 nonbases for the two matroids arising from the V\'{a}mos matroid}
    \label{fig:cubes}
\end{figure}

The non-V\'{a}mos matroid, which has the additional nonbasis $0167$, is realizable. Its Dressian (modulo lineality and intersecting with the sphere) is a 7 dimensional polyhedral complex and has $f$-vector
$$
f = (200, 1814, 4996, 4585, 840, 56, 16, 2).
$$
Like in the case of the Pappus and non-Pappus matroids, we also have here that the Dressian of the non-V\'{a}mos matroid is the projection of the Dressian of the V\'{a}mos matroid along the $p_{0167}$ axis. Indeed, the $f$-vector in Proposition
\ref{prop:vamos}  is
$$(1+f_0,f_0+f_1,f_1+f_2,f_2+f_3,f_3+f_4,f_4+f_5,f_5+f_6,f_6+f_7,f_7).$$
%\begin{remark}
%It is tempting to wonder if whenever $\M$ and $\M'$ are matroids of rank $d$ on $n$ elements such that $\B' = \B\cup{\sigma}$, whether their Dressians are related by projection along the axis $e_\sigma$. By \cite[Proposition 3.1]{dress} there is a way to extend $v \in \Dr_{\M}$ to a valuation $v'$ on $\M'$. Indeed, these give subdivisions of the matroid polytope which contain a maximal cell given by the new vertex together with the closest face to it in $P_{\M}$. However, there could be more subdivisions than this, such as subdivisions in which the vertex corresponding to $\sigma$ is contained in more than one maximal cell. At present the author does not know of any examples of this behavior.
%\madeline{David-- Have you thought about this? %Maybe you know a counterexample.}
%\david{I'm not sure what "related by" means here. But it is not always true that projection modulo $e_{\sigma}$ maps $\Dr_{\M'}$ to $\Dr_{\M}$; just take $\M' = U(2,4)$ and $\M$ to be $U(2,4)$ with one basis missing.}
%\end{remark}

%%%%%%%%%%%%%%%%%%%%%%%%%%%%%%%%%%%%%%%%%%%%%%%%%%%%%%
\subsection{Cube}
Consider the matroid $\M_\square$ defined by the cube on the left side of Figure~\ref{fig:cubes}, whose twelve planes define the nonbases. Its Dressian $\Dr_{\M_\square}$, modulo lineality, consists of two points joined by a line segment. The two points each induce a subdivision with two cells, where the matroid corresponding to one cell is one in which one great tetrahedron (i.e., either 0167 or 2345) is collapsed. The points on the segment joining these two points induce subdivisions with three cells, where the largest cell corresponds to the subdivision in which both great tetrahedra have been collapsed.
These matroids are not realizable, so by Proposition~\ref{prop:realizability}, the Grassmannian $\Gr_{\M_\square}$ simply consists of the lineality space.

%%%%%%%%%%%%%%%%%%%%%%%%%%%%%%%%%%%%%%%%%%%%%%%%%%%%%%

\subsection{Twisted V\'{a}mos}
\label{sec:twistedvamos}
We now study the Dressian of the rank 4 matroid on 8 elements arising from the polytope depicted in Figure~\ref{fig:maddiehedron} and listed at \cite{polytopia}. The nonbases of this matroid are $\{0123,0145,2345,2567,3467\}$.
 Modulo its lineality space and intersecting with a sphere, this is a five dimensional polyhedral complex with $f$ vector $(120, 1196, 3377, 2985, 397, 8)$.

\begin{figure}
    \centering
    \includegraphics[height = 1.8 in]{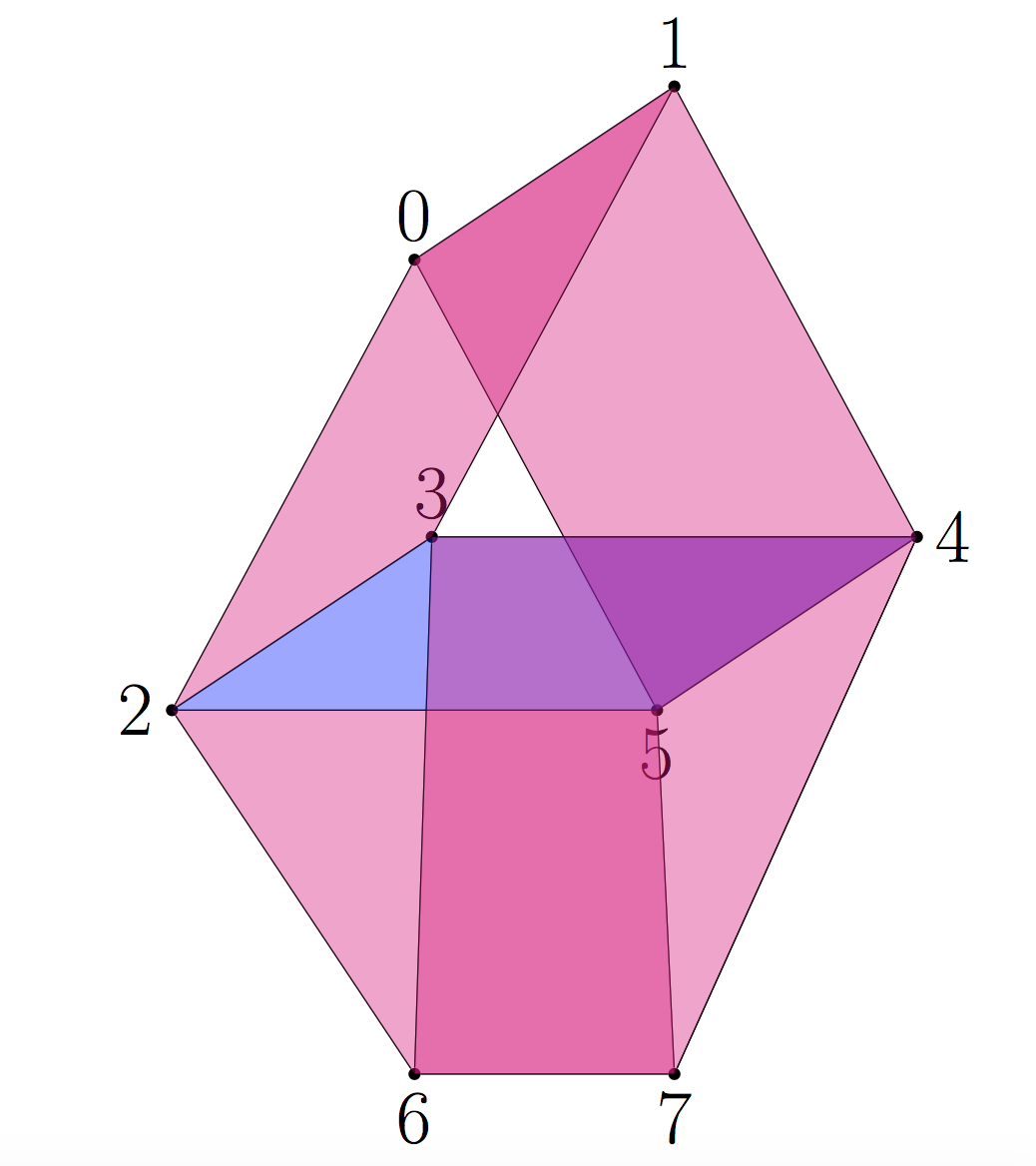}
    \caption{This is a depiction of the twisted V\'{a}mos matroid.}
    \label{fig:maddiehedron}
\end{figure}

%%%%%%%%%%%%%%%%%%%%%%%%%%%%%%%%%%%%%%%%%%%%%%%%%%%%%%

\subsection{Desargues}
\label{sec:desargues}

We now study the Desargues configuration. It is named after Gerard Desargues, and the Desargues Theorem proves the existence of this configuration. A depiction of the Desargues configuration is given in Figure~\ref{fig:desargues}. This is another example of a $10_3$ configuration. It is the rank 3 matroid $\M_D$ on $\{0,1,\ldots, 9\}$ with nonbases given by
$$
\binom{10}{3} \backslash \B_{\M_D} = \{027,036,058,135,149,168,234,259,467,789\}.
$$

\begin{proposition}
Modulo lineality and intersecting with a sphere, the Dressian $\Dr_{\M_D}$ is a 3 dimensional polyhedral complex with 70 vertices, 370 edges, 510 two dimensional cells, and 150 three dimensional cells.
\end{proposition}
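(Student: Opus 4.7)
The plan is to follow exactly the computational recipe that was used for all of the previous matroids in this section, namely the Star $10_3$, non-Pappus, V\'amos, cube, and twisted V\'amos. Since the Desargues matroid $\M_D$ is a rank $3$ matroid on $10$ elements with $10$ nonbases, the initial generating set $G_{\M_D}$ of the matroid Pl\"ucker ideal, obtained by setting the ten non-basis variables to zero in the three-term Pl\"ucker relations, will be large and will contain many binomials. The first step is to feed $\M_D$ into the \emph{Mathematica} implementation of Algorithm~\ref{algo:reduction} available at the github link, which iteratively uses the binomials in $G_{\M_D}$ to eliminate variables, per Lemma~\ref{lem:linearity}, recording any inequalities produced by trinomial-to-binomial collapses as in case~(\ref{inequalities}). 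Because $\M_D$ is realizable (indeed, it comes from the Desargues theorem in characteristic $0$), the pathology from Example~\ref{ex:caution} does not appear, and the algorithm simply projects the Dressian onto the quotient by its linearity space.

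Next, I would hand the reduced generating set $G'$ together with the constraint set $L'$ to \texttt{gfan}~\cite{gfan} and run \texttt{tropicalintersection} to compute the tropical prevariety, which by Theorem~\ref{thm:reduction} is isomorphic as a polyhedral complex to $\Dr_{\M_D}$ modulo lineality. To present the answer as a spherical complex of the stated dimensions I would intersect with a sphere (equivalently, pass to the $f$-vector of the underlying polyhedral fan modulo lineality), and then simply read off the numbers of cells in each dimension from the \texttt{gfan} output. The claim is that after doing this, one finds exactly $70$ vertices, $370$ edges, $510$ two-dimensional cells, and $150$ three-dimensional cells.

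The main obstacle is purely computational: even after Algorithm~\ref{algo:reduction} reduces the ambient dimension, the Desargues matroid is larger and more symmetric than the other $10_3$ configuration considered (the star $\M_\star$), and the resulting tropical prevariety is three-dimensional with over a thousand cells, so \texttt{tropicalintersection} may be slow and memory-intensive. In practice the symmetry of $\M_D$ (it has a large automorphism group, including the $\Z/2$ exchanging the two triangles in Figure~\ref{fig:desargues}) can be exploited to sanity-check the output: the $f$-vector must be invariant under the induced action on the Dressian, and the $70$ rays, $370$ edges, etc. should decompose into orbits whose sizes sum correctly. I would also cross-check by verifying the balancing condition at representative rays, exactly as was done for the star matroid, which simultaneously confirms that no maximal cell has been missed and that no extraneous lower-dimensional cell has been reported inside a higher-dimensional one.
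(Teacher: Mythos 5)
Your proposal matches the paper's proof almost exactly: the paper applies Algorithm~\ref{algo:reduction} to the $630$ matroid Pl\"ucker trinomials in $74$ variables, reducing to $69$ polynomials in $24$ variables, and then runs \texttt{tropicalintersection} in \texttt{gfan} to read off the $f$-vector. The extra sanity checks you mention (exploiting the automorphism group and verifying balancing) are not carried out in the paper's proof of this particular proposition, but they are harmless additions and do not change the essential route.
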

\begin{proof}
Using Algorithm~\ref{algo:reduction}, we take the generators $G_{\M_{\M_D}}$ and make a new generating set whose tropical prevariety will not have linearity. 
Initially, we are working with 630 polynomials in 74 variables. After applying Algorithm~\ref{algo:reduction}, we have 69 polynomials and 24 variables. Using the command \texttt{tropicalintersection} in \texttt{gfan} \cite{gfan}, we obtain the Dressian as claimed. 
\end{proof}

Of the two dimensional cells, all which are not contained in a larger cell are triangles.
Of the three dimensional cells, 5 are cubes, 30 are pyramids with square bases, and 115 are tetrahedra. The square bases of the pyramids are faces of the cubes. Each pyramid shares a square base with another pyramid. In total, there are 10 vertices which are the tmartapaper of pyramids. The graph on these vertices whose edges correspond to pyramids sharing bases is a Petersen graph. 

\begin{figure}[ht]
    \centering
    \includegraphics[height = 2 in]{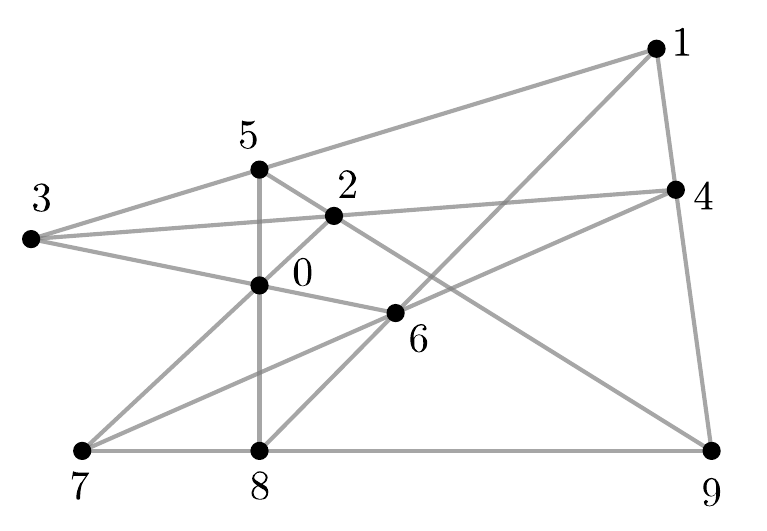}
    \caption{The Desargues Configuration, with the nonbases represented as gray lines.}
    \label{fig:desargues}
\end{figure}

%%%%%%%%%%%%%%%%%%%%%%%%%%%%%%%%%%%%%%%%%%%%%%%%%%%%%%

\subsection{A Partial Projective Plane}
\label{sec:proj_plan}

As we explained in Corollary~\ref{cor:FinitePlanesRigid}, Dress and Wenzel showed that the matroid of $\PP^n(F)$ is rigid for any finite field $F$ and any $n \geq 2$. We now consider the matroid of a partial projective plane, which will be used in our counterexamples in Section~\ref{sec:counterexamples}.

A \emph{partial projective plane} is a collection of points $P$, and a collection of subsets of $P$, called \emph{lines}, such that:
\begin{enumerate}
    \item each line contains at least 2 points,
    \item every two points lie on exactly one line, and
    \item every two lines meet in at most one point.
\end{enumerate}

We will construct a partial projective plane $\ppp$,%\david{Added a macro for this. Feel free to change it to something else.}
obtained from $\mathbb{P}^2(\mathbb{F}_3)$. Consider the collection of representatives of equivalence classes in $\mathbb{P}^2(F_3)$ given by
\begin{align*}
\mathbb{P}^2(\mathbb{F}_3) = \{&(1, 0, 0), (1, 0, 1), (1, 0, 2), (1, 1, 0), (1, 1, 1), (1, 1, 2), (1,2, 0),\\& (1, 2, 1), (1, 2, 2), (0, 1, 0), (0, 1, 1), (0, 1, 2), (0, 0, 1)\},
\end{align*}
and label these points $\{0,1,\ldots, 12\}$ respectively. Then the points $\{0,3,6,9\}$ form a line in $\mathbb{P}^2(\mathbb{F}_3)$. Let $\ppp$ be the partial projective plane whose lines are all lines in $\mathbb{P}^2(\mathbb{F}_3)$, with the line $\{0,3,6,9\}$ removed, and add in the six lines $\{\{0,3\},\{6,9\},\{0,6\},\{3,9\},\{0,9\},\{3,6\}\}$. This is a matroid whose bases are $$\mathcal{B}(\ppp) = \mathcal{B}(\mathbb{P}^2(\mathbb{F}_3)) \cup \{\{3,6,9\},\{0,6,9\},\{0,3,9\},\{0,3,6\}\}.$$ We can compute the Dressian of $\ppp$ using Algorithm \ref{algo:reduction}.

\begin{proposition}
Modulo lineality and intersecting with a sphere, the Dressian $\Dr_{\ppp}$ is a 1 dimensional polyhedral complex with 5 vertices and 4 edges, connected as pictured in Figure \ref{fig:p2f3_dress}.
\end{proposition}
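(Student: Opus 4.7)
The plan is to follow exactly the template used in the preceding subsections (Star $10_3$, non-Pappus, V\'amos, Desargues), since $\ppp$ has been described explicitly enough to run Algorithm~\ref{algo:reduction} and feed the output into \texttt{gfan}. First, I would list the basis set $\B(\ppp)$ explicitly: start from $\B(\PP^2(\FF_3))$ (all rank-3 subsets that are not contained in one of the twelve remaining lines of $\PP^2(\FF_3)$) and adjoin the four new bases $\{3,6,9\},\{0,6,9\},\{0,3,9\},\{0,3,6\}$ created by replacing the line $\{0,3,6,9\}$ with its six two-element sublines. From this list I would build $G_{\ppp}$ by taking every three-term Pl\"ucker relation $p_{Sij}p_{Skl} - p_{Sik}p_{Sjl} + p_{Sil}p_{Sjk}$ with $S \in \binom{[13]}{1}$ and deleting any variable whose subscript is a non-basis of $\ppp$.

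Next I would apply Algorithm~\ref{algo:reduction} to this generating set. Because $\ppp$ differs from the rigid matroid $\PP^2(\FF_3)$ only by the local modification at $\{0,3,6,9\}$, Corollary~\ref{cor:FinitePlanesRigid} together with the discussion of $\TT_M^{\B}$ leads us to expect $\Hom(\TT_{\ppp}^0,\RR)$ to be very small; the binomials introduced by the many non-bases should allow the algorithm to eliminate almost every variable, reducing the Laurent polynomial ring to just a handful of generators and leaving a small tropical prevariety to intersect. As in the other subsections, I would report the number of polynomials and variables before and after reduction so that the computation is reproducible from the \emph{Mathematica} implementation at the GitHub repository.

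With the reduced system in hand, I would pass the generators to \texttt{gfan}'s \texttt{tropicalintersection} command to compute the prevariety. The combinatorial output (after quotienting by lineality and intersecting with a sphere, in the same normalization used in all previous examples) should then be read off directly, and compared against the claim of $5$ vertices and $4$ edges arranged as in Figure~\ref{fig:p2f3_dress}.

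The main obstacle I anticipate is verification rather than computation: one must make sure that the output of \texttt{tropicalintersection} is actually the Dressian and not merely a tropical prevariety strictly containing it (as would matter if any additional implied binomial relations were missed during the reduction), and that no cones collapsed by the projection in the algorithm are being miscounted. Following the pattern set in the Star $10_3$ subsection, I would verify each maximal cone $\sigma$ by choosing a random $w$ in its relative interior, computing $\mathrm{in}_w(I_{\ppp})$, and checking that it contains no monomial; this confirms that each reported cone really lies in $\Dr_{\ppp}$. Given the smallness of the predicted answer, this check is entirely feasible, and together with the \texttt{gfan} output it yields the stated $f$-vector and incidence structure.
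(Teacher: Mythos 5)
Your approach is essentially the same as the paper's: build $G_{\ppp}$, reduce via Algorithm~\ref{algo:reduction}, and run \texttt{tropicalintersection} in \texttt{gfan}; the paper reports 6003 polynomials in 238 variables before reduction and 871 polynomials in 21 variables afterward, and then simply reads off the $f$-vector from the \texttt{gfan} output. Your closing ``verification'' step is misplaced, though. You worry that \texttt{tropicalintersection} might return something strictly larger than $\Dr_{\ppp}$, but the Dressian \emph{is by definition} the tropical prevariety cut out by $G_{\ppp}$, and Theorem~\ref{thm:reduction} guarantees that the reduced system $G'$ (together with any recorded inequalities) has a prevariety linearly isomorphic to that of $G_{\ppp}$ — no relations are ``missed.'' Moreover, the check you propose — sampling $w$ in each cone and testing whether $\mathrm{in}_w(I_{\ppp})$ contains a monomial — detects membership in the tropical \emph{variety} $\Gr_{\ppp}$, not in the prevariety $\Dr_{\ppp}$. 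That is exactly why the Star $10_3$ proof uses it: there the goal is to carve the Grassmannian out of the already-computed Dressian. Here the Proposition makes a claim only about $\Dr_{\ppp}$, so that step is both unnecessary and, as a test for Dressian membership, the wrong tool.
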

\begin{proof}
Using Algorithm~\ref{algo:reduction}, we take the generators $G_{\ppp}$ and make a new generating set whose tropical prevariety will not have linearity. 
Initially, we are working with 6003 polynomials in 238 variables. After applying Algorithm~\ref{algo:reduction}, we have 871 polynomials and 21 variables. Let $I$ be the ideal generated by these polynomials. Using the command \texttt{tropicalintersection} in \texttt{gfan} \cite{gfan}, we obtain the Dressian as claimed. 
\end{proof}

\begin{figure}[h]
    \centering
    \includegraphics[height=1.5 in]{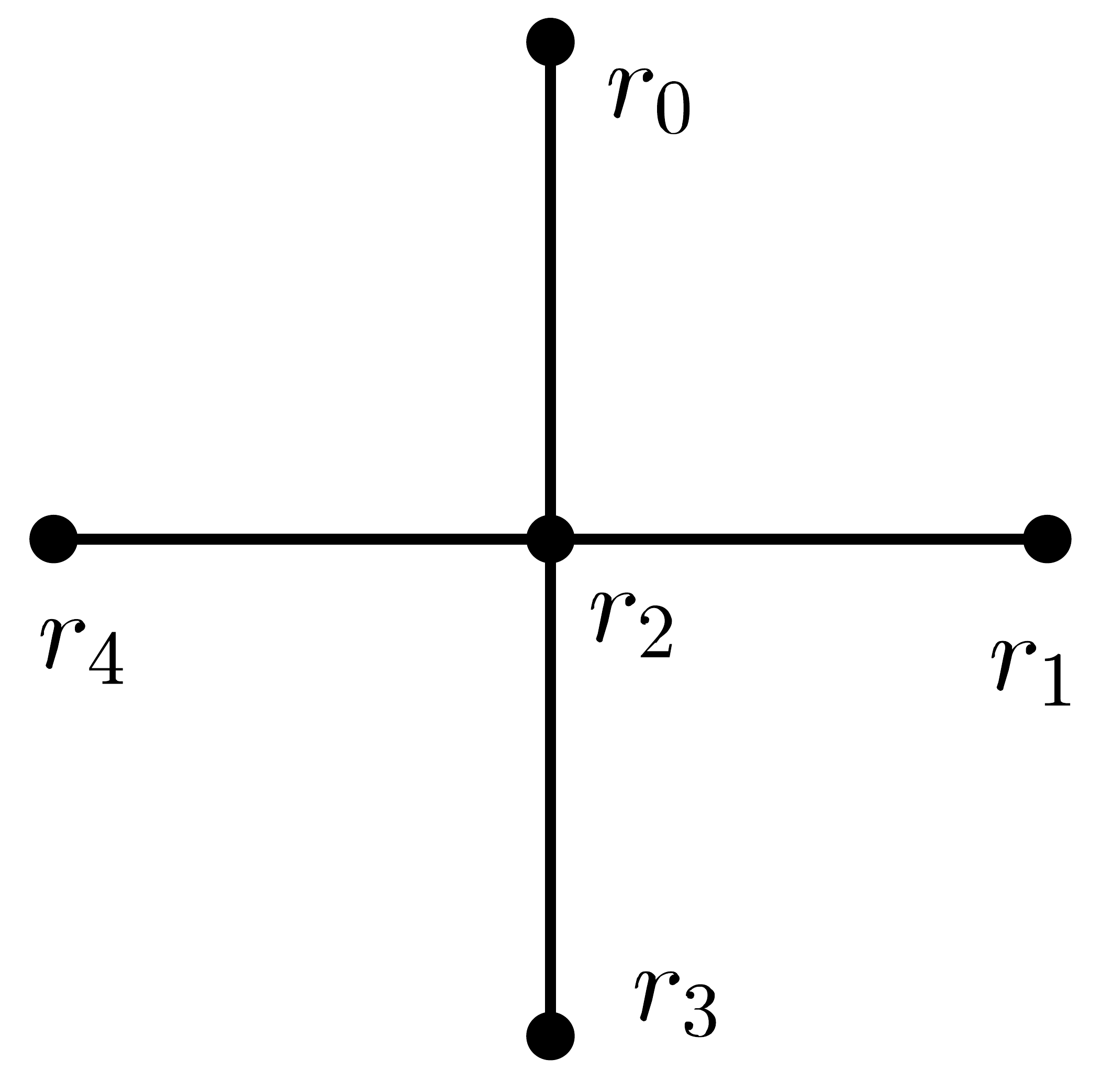}
    \caption{The Dressian of the partial projective plane described in Section \ref{sec:proj_plan}.}
    \label{fig:p2f3_dress}
\end{figure}

We now describe the subdivisions induced on the matroid polytope $P_{\ppp}$ by various points in this Dressian.
We use the following notation in Table \ref{tab:p2f3_subs}.
\begin{enumerate}
    \item The matroid
    $\mathcal{N}_i$ for $i \in \{0,3,6,9\}$ is the partial projective plane obtained from $\mathbb{P}^2(\mathbb{F}_3)$ by removing $i$ from the line $\{0,3,6,9\}$, and adding the lines $\{ij\}$ for $j \in \{0,3,6,9\}\backslash \{i\}$.
    \item  The matroid $\M(i,j,k)$ for distinct $i,j,k \in \{0,3,6,9\}$ is the parallel extension of $\mathcal{U}(3,4)$ where $i,j,k$ each form a parallel class and all other elements are one parallel class.
    \item The matroid $\M(0,3,6,9)$ is the parallel extension of $\mathcal{U}(3,5)$ where $0,3,6,9$ are each in their own parallel class and all other elements are one parallel class.
    \item The matroid $\M(0,3,6,9) - \{ijk\}$ for distinct $i,j,k \in \{0,3,6,9\}$ is the matroid with bases $\mathcal{B}(\M(0,3,6,9))\backslash\{ijk\}$.
\end{enumerate}

\begin{table}[h]
    \centering
    \begin{tabular}{c|c}
        Cell of $Dr_\M$ & Matroids in the subdivision of $P_{\ppp}$ \\
        \hline
        $r_0$ &  $\mathcal{N}_9$, $\M(0,3,6)$ \\
        $r_1$ & $\mathcal{N}_6$, $\M(0,3,9)$\\
        $r_2$ & $\mathbb{P}^2(\mathbb{F}_3)$, $\M(0,3,6,9)$\\
        $r_3$ & $\mathcal{N}_3$, $\M(0,6,9)$\\
        $r_4$ & $\mathcal{N}_0$, $\M(3,6,9)$\\
        $\overline{r_0r_2}$ & $\mathbb{P}^2(\mathbb{F}_3)$, $\M(0,3,6)$, $\M(0,3,6,9) - \{0,3,6\}$\\
        $\overline{r_1r_2}$ & $\mathbb{P}^2(\mathbb{F}_3)$, $\M(0,3,9)$, $\M(0,3,6,9) - \{0,3,9\}$\\
        $\overline{r_2r_3}$ & $\mathbb{P}^2(\mathbb{F}_3)$, $\M(0,6,9)$, $\M(0,3,6,9) - \{0,6,9\}$\\
        $\overline{r_2r_4}$ & $\mathbb{P}^2(\mathbb{F}_3)$, $\M(3,6,9)$, $\M(0,3,6,9) - \{3,6,9\}$\\
    \end{tabular}
    \caption{Subdivisions of $P_\M$ induced by points in $Dr_\M$. 
    }
    \label{tab:p2f3_subs}
\end{table}

Every subdivision of $P_{\M(0,3,6,9)}$ is pulled back from a subdivision of $P_{\mathcal{U}_{3,5}}$. So, its Dressian is the Petersen graph. Even though $\M(0,3,6,9)$ appears in a matroid subdivision of $\M$, not all cells appearing in subdivisions of $\M(0,3,6,9)$ appear in subdivisions of $\M$. See Section \ref{sec:counterexamples} for further discussion.

%\begin{table}[ht]
%    \centering
%    \begin{tabular}{c|c | c | c | c}
%         No. & nonbases                                          & %$\dim(\Dr_\M/L)$ &$f$-vector &notes\\
%         \hline
%         1      &    $\{\}$                                         & 4     %        &(1,65,535,1350,1005) & $U_{36}$   \\ 
%         2      &    $\{345\}$                                      & 3    %         &(1,25,105,105) &  \\ 
%         3      &    $\{012,345\}$                                  & 2    %         &(1,6,9)    & $K_{3,3}$\\ 
%         4      &    $\{125,345\}$                                  & 2    %         &(1,10,15)  & Petersen \\  
%         5      &    $\{024,125,345\}$                              & 1    %         &(1,3)        & 3 rays\\ 
%         6      &    $\{013,024,125,345\}$                          & 0    %         &(1)        & linear\\ 
%         7      &    $\{234,235,245,345\}$                          & 2    %       &(1,10,15)      & Petersen?  \\ 
%         8      &    $\{015,234,235,245,345\}$                      & 1    %         & (1,3)       & 3 rays\\ 
%         9      &    $\{123,124,125,134,135,145,234,235,245,345\}$  &2     %        &(1,10,15)      &  Petersen? \\ 
%    \end{tabular}
 %   \caption{Caption}
 %   \label{tab:my_label}
%\end{table}

%%%%%%%%%%%%%%%%%%%%%%%%%%%%%%%%%%%%%%%%%%%%%%%%%%%%%%%
\section{Counterexamples} \label{sec:counterexamples}

Using our computational tools, we are able to provide counterexamples to two plausible claims about matroid rigidity and initial matroids.
We recall that a matroid $M$ is called \emph{rigid} if $P_M$ has no matroidal subdivisions. 
If all maximal cells in a regular matroid subdivision correspond to rigid matroids, then the subdivision cannot be refined to a finer matroid subdivision.
We will now give a counterexample showing that the converse does not hold -- a matroid subdivision can be nonrefineable and yet contain nonrigid matroids.
This answers Question~2 from~\cite{martapaper}, and fixes an error in a preprint version of this paper~\cite[Theorem B]{paperv1}.

\begin{theorem}
\label{thm:finestsubs}
There are finest matroid subdivisions of matroid polytopes containing maximal cells which are not matroid polytopes of rigid matroids.
\end{theorem}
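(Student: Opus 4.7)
The plan is to use the partial projective plane $\ppp$ analyzed in Section~\ref{sec:proj_plan}. Modulo lineality, $\Dr_\ppp$ is the one-dimensional tree of Figure~\ref{fig:p2f3_dress}, so its maximal cones are the four edges $\overline{r_2 r_i}$ for $i \in \{0,1,3,4\}$. Since matroid subdivisions of $P_\ppp$ correspond to cones of $\Dr_\ppp$, a point $w$ induces a finest matroid subdivision of $P_\ppp$ precisely when $w$ lies in the relative interior of a maximal cone. I will pick $w$ in the interior of $\overline{r_0 r_2}$ and locate a non-rigid cell in the resulting three-cell subdivision.

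By Table~\ref{tab:p2f3_subs}, that subdivision has maximal cells $P_{\mathbb{P}^2(\mathbb{F}_3)}$, $P_{\M(0,3,6)}$, and $P_{\M(0,3,6,9) - \{0,3,6\}}$. The first is rigid by Corollary~\ref{cor:FinitePlanesRigid}. The second is a parallel extension of $\mathcal{U}(3,4)$, and the Pl\"ucker ideal of $\mathcal{U}(3,4)$ has no three-term relations (there being only four Pl\"ucker coordinates), so its Dressian coincides with its lineality space; the same then holds for $\M(0,3,6)$, which is therefore rigid. It suffices to show that $\M(0,3,6,9) - \{0,3,6\}$ is \emph{not} rigid.

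I plan to do this by recognizing $\M(0,3,6,9) - \{0,3,6\}$ as the parallel extension of the rank-3 matroid $\M'$ on $\{A,B,C,D,E\}$ with unique nonbasis $\{A,B,C\}$, where $E$ is replaced by the nine parallel copies $\{1,2,4,5,7,8,10,11,12\}$. Any weight $w$ on $\B(\M')$ lifts to a weight $\tilde w$ on $\B(\M(0,3,6,9) - \{0,3,6\})$ via the class-projection; because $P_{\M(0,3,6,9) - \{0,3,6\}}$ maps affinely onto $P_{\M'}$, with each fiber a product of simplices, a matroid subdivision induced by $w$ lifts to one induced by $\tilde w$. So it is enough to exhibit a nontrivial matroid subdivision of $P_{\M'}$. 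The hyperplane $x_A + x_B = 1$ works: on the side $x_A + x_B \le 1$ one obtains the matroid polytope of the matroid with $A \parallel B$, and on the side $x_A + x_B \ge 1$ the matroid polytope of the matroid whose nonbases are $\{A,B,C\}$ and $\{C,D,E\}$ (two lines meeting at $C$). A direct verification of basis exchange in each, together with Theorem~\ref{thm:ggms} applied to the common face, confirms that this is a genuine matroid subdivision, giving the desired non-rigidity.

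The only delicate step is the maximality claim about $\overline{r_0 r_2}$ in $\Dr_\ppp$, and this follows from the computer computation of Section~\ref{sec:proj_plan}. The conceptual content of the argument is that a non-rigid cell can nevertheless sit inside a finest matroid subdivision of a larger polytope, because its own matroidal refinements fail to extend across the shared faces to produce a matroidal refinement of the ambient subdivision.
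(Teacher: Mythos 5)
Your proof is correct, and it takes a genuinely different route from the paper's. The paper works inside $\Delta(3,13)$: it sets up the symmetric subdivision $\mathcal{D}_0$ with facets $P_{\PP^2(\FF_3)}$ and the thirteen $P_{M(a,b,c,d)}$, then argues by hand that \emph{any} finest refinement of $\mathcal{D}_0$ contains a non-rigid cell, by classifying the matroid subdivisions of $\Delta(3,5)$ that leave the octahedron $e_5 + \Delta(2,4)$ unsubdivided. You instead work directly in $P_\ppp$ and read the answer off the Dressian computation of Section~\ref{sec:proj_plan}: since modulo lineality $\Dr_\ppp$ is one-dimensional, the edge $\overline{r_0 r_2}$ is a maximal cone, its interior gives a finest matroid subdivision of $P_\ppp$, and Table~\ref{tab:p2f3_subs} hands you its three cells. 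Both arguments pivot on the same pair of facts — rigidity of $\PP^2(\FF_3)$ via Corollary~\ref{cor:FinitePlanesRigid}, and non-rigidity of the rank-$3$ matroid on five elements with a single nonbasis (your $\M'$) — and your split along $x_A + x_B = 1$ makes the latter explicit where the paper simply asserts it. The trade-off: your argument leans on the \texttt{gfan} computation behind Table~\ref{tab:p2f3_subs}, while the paper's is self-contained and in fact shows more, namely that \emph{every} finest refinement of $\mathcal{D}_0$ has non-rigid pieces (at least thirteen). Two small inaccuracies in your write-up, neither load-bearing: the fibers of the class-projection $P_{\M(0,3,6,9)-\{0,3,6\}} \to P_{\M'}$ are hypersimplex slices rather than products of simplices; and inferring rigidity of $\M(0,3,6)$ from that of $U(3,4)$ uses the unstated fact that matroid subdivisions of a parallel extension are pulled back from the simplification — but you never actually need $\M(0,3,6)$ to be rigid, since a single non-rigid cell suffices.
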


\begin{proof}
Consider the matroid $\PP^2(\FF_3)$, a matroid of rank $3$ on $13$ elements, and fix a numbering of the elements of $\Delta(3,13)$, so we can think of $P_{\PP^2(\FF_3)}$ as a subset of $\Delta(3,13)$. We will build a regular subdivision of $\Delta(3,13)$ which will have $P_{\PP^2(\FF_3)}$ as a facet. 

We first discuss a subdivision $\mathcal{D}_0$ of $\Delta(3,13)$ which is not finest, but is more symmetrical. For $\{ a,b,c \} \subset [13]$, let $p_{abc}$ be $0$ if $\{ a,b,c \}$ is a basis of $\PP^2(\FF_3)$ and let $p_{abc}$ be $1$ if $\{ a,b,c \}$ is not a basis. This is a tropical Pl\"ucker vector, and the induced subdivision of $\Delta(3,13)$ has $14$ facets, which we will now describe. One of those facets is $P_{\PP^2(\FF_3)}$. The other $13$ facets are indexed by the 13 lines $\{ a,b,c,d \}$ in $\PP^2(\FF_3)$; and we denote them $M(a,b,c,d)$, as in Section~\ref{sec:proj_plan}. The matroid $M(a,b,c,d)$ is a parallel extension of $U_{3,5}$, where each of the elements $a$, $b$, $c$ and $d$ forms a singleton parallelism class, and the other $9$ elements of $\PP^2(\FF_3)$ form the remaining class. 

Let $\mathcal{D}$ now be a finest subdivision of $\Delta(3,13)$ refining this one. Let $\{a,b,c,d \}$ be a line of $\PP^2(\FF_3)$ and let $e$ be another point not on the line. For any $\{ x,y\} \subset \{ a,b,c,d \}$, the triple $\{ x,y,e \}$ is a basis of $\PP^2(\FF_3)$, so these six bases of $\PP^2(\FF_3)$ form an octahedron in $P_{\PP^2(\FF_3)}$. Since $\PP^2(\FF_3)$ is rigid, $P_{\PP^2(\FF_3)}$ must appear, unsubdivided, in $\mathcal{D}$. So the subdivision of $M(a,b,c,d)$ induced by $\mathcal{D}$ leaves this ocathedron unsubdivided as well.

We now describe the subdivisions of $M(a,b,c,d)$. There is a map from $P_{M(a,b,c,d)}$ to $\Delta(3,5)$ induced by the linear transformation sending $e_a\mapsto e_1$, $e_b \mapsto e_2$, $e_c \mapsto e_3$, $e_d \mapsto e_4$ and $e_x \mapsto e_5$ for $x \not\in \{a,b,c,d\}$. Every matroid subdivision of $\M(a,b,c,d)$ is pulled back from a matroid subdivision of $\Delta(3,5)$. The condition that a subdivision does not subdivide the octahedra discussed in the previous paragraph is equivalent to saying that the corresponding subdivision of $\Delta(3,5)$ does not subdivide $e_5 + \Delta(2,4)$. There are $4$ such nontrivial subdivisions of $\Delta(3,5)$, each of which contains one rigid rigid piece and one non-rigid piece. 
The non-rigid piece has a single non-basis, which is a three-element subset of $\{ a,b,c,d \}$. The preimage of this non-rigid piece in $\Delta(3,5)$ is a non-rigid matroid in $M(a,b,c,d)$.
We have shown that any subdivision of $\Delta(3,13)$ refining $\mathcal{D}_0$ has a nonrigid piece (in fact, at least $13$ of them). \end{proof}

%\david{I believe we should be able to make an explicit finest subdivision, realizable in char 3, as follows: Let $K = \FF_3[[t]]$. Let $v_1$, $v_2$, ..., $v_{13}$ be vectors in $\FF_3^3$ representing the points of $\PP^2(\FF_3)$. Let $\sigma \in S_{13}$ be a permutation which does not map any line to itself; an explicit such permutation should be multiplication by an element of $GL_3(\FF_3)$ whose characteristic polynomial is irreducible over $\FF_3$. Then I think the $13$ vectors $v_j + t v_{\sigma(j)}$ should give a Plucker vector which has valuation $0$ at the points of $\PP^2(\FF_3)$ and which, for each line $\{ a,b,c, d \}$, assigns valuation $1$ to three of the subsets of $\{ a,b,c, d\}$ and valuation greater than $1$ to the remaining one. Can you see if you agree with this and, if so, I'll try and add it to the file?}

We considered the notion above of being an initial matroid, where $M'$ is an initial matroid of $M$ if $P_{M'}$ appears as a face in matroid subdivision of $P_M$. 
It would be natural to guess that this is a transitive relation, but in fact it is not. 
There is a fairly easy counter-example, which was observed in~\cite{martapaper}: let $M = \PP^2(\FF_3)$. Then $P_M$ has octahedral faces, let $P_{M'}$ be one of these. These octahedra can, in turn, be split into two square pyramids; let $P_{M''}$ be one of these. 
Then $M'$ is an initial matroid of $M$, since $P_{M'}$ is a face of $P_M$ by Proposition \ref{prop:polytope}, and $M''$ is an initial matroid of $M'$. However, $M''$ is not an initial matroid of $M$; since $M$ is rigid, the only initial matroids of $M$ correspond to faces of $P_M$, and $P_{M''}$ is not a face of $M$.

The matroid $M'$ in the pervious paragraph is the direct sum of $U_{2,4}$ with a co-loop and eight loops, and is thus disconnected. 
It is natural to ask whether such a counter-example can involve only connected matroids. We now provide such a counter-example, with the aid of the algorithmic tools from this paper. 
This result corrects Proposition~3.2 from an earlier version of this manuscript~\cite{paperv1}. This answers \cite[Question 1]{martapaper}, giving an example of two matroid polytopes $P_{\M''} \subset P_{\M}$ where no matroidal subdivision of $P_\M$ has $P_{\M''}$ as a cell.

\begin{theorem}
\label{theorem:nontransitive}
Let $\M$, $\M'$, and $\M''$ be matroids of rank $d$ on $n$ elements. If $\M'$ is an initial matroid of $\M$ and $\M''$ is an initial matroid of $\M'$, it is not necessarily the case that $\M''$ is an initial matroid of $\M$, even if the matroids are connected. 
\end{theorem}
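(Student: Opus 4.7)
The plan is to exhibit specific connected rank-$3$ matroids on $13$ elements and verify each required property. I will take $\M := \ppp$ to be the partial projective plane of Section~\ref{sec:proj_plan}, $\M' := \M(0,3,6,9)$, and let $\M''$ be the rank-$3$ matroid on $\{0, 1, \ldots, 12\}$ whose parallel classes are $\{0, 3\}$, $\{6\}$, $\{9\}$, and $\{1, 2, 4, 5, 7, 8, 10, 11, 12\}$. All three are connected: $\M$ is simple with a full-dimensional matroid polytope, while $\M'$ and $\M''$ are parallel extensions of the connected uniform matroids $\mathcal{U}(3,5)$ and $\mathcal{U}(3,4)$ respectively.

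The first two claims will be immediate from Section~\ref{sec:proj_plan}. For $\M'$ as an initial matroid of $\M$, I will invoke Table~\ref{tab:p2f3_subs}: the ray $r_2 \in \Dr_\ppp$ induces a matroid subdivision of $P_\ppp$ whose cells include $P_{\M'}$, so Proposition~\ref{prop:polytope} gives the claim. For $\M''$ as an initial matroid of $\M'$, I will use the observation at the end of Section~\ref{sec:proj_plan} that, since $\M'$ is a parallel extension of $\mathcal{U}(3,5)$, every matroid subdivision of $P_{\M'}$ is pulled back from a subdivision of $\Delta(3,5)$; so $\Dr_{\M'}$ agrees with $\Dr_{\mathcal{U}(3,5)}$. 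The ray of $\Dr_{\mathcal{U}(3,5)}$ for the split $\{0,3\} \mid \{6,9,*\}$ of the five parallel classes $\{0, 3, 6, 9, *\}$ induces a subdivision of $\Delta(3,5)$ one of whose cells is the matroid where $\{0,3\}$ becomes a parallel pair, and the pullback of this cell under the parallel extension is precisely $\M''$.

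The hard part is to show that $\M''$ is \emph{not} an initial matroid of $\M$. By Proposition~\ref{prop:polytope}, it suffices to enumerate every matroid whose polytope appears as a cell of some matroid subdivision $\Delta_v$ with $v \in \Dr_\ppp$, and check that $\M''$ does not appear. The Dressian $\Dr_\ppp$ was computed in Section~\ref{sec:proj_plan} and is small: Table~\ref{tab:p2f3_subs} lists the cells of $\Delta_v$ as $v$ ranges over all rays and maximal cones of $\Dr_\ppp$, and together with $\ppp$ itself (from the trivial subdivision at the origin) this exhausts all cells. The resulting complete list of initial matroids of $\ppp$ is $\ppp$, $\mathbb{P}^2(\mathbb{F}_3)$, the four $\mathcal{N}_i$, the four $\M(i,j,k)$, $\M(0,3,6,9)$, and the four $\M(0,3,6,9) - \{ijk\}$.

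To conclude I will observe that in every matroid on this list, each of $0$, $3$, $6$, $9$ is its own parallel class: the simple matroids $\ppp$, $\mathbb{P}^2(\mathbb{F}_3)$, and $\mathcal{N}_i$ have no nontrivial parallel pairs at all, while $\M(i,j,k)$, $\M(0,3,6,9)$, and $\M(0,3,6,9) - \{ijk\}$ have $0, 3, 6, 9$ as singleton classes by construction. Since $\M''$ has $\{0, 3\}$ as a nontrivial parallel pair, it is not on the list and so not an initial matroid of $\M$. The main obstacle is thus the substantive Dressian computation for $\ppp$ underlying Table~\ref{tab:p2f3_subs}; without Algorithm~\ref{algo:reduction}, this computation would be infeasible.
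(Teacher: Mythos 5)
Your proof takes the same overall strategy as the paper — exploit the explicit Dressian computation for the partial projective plane $\ppp$ from Section~\ref{sec:proj_plan} — but it chooses a different matroid $\M''$. The paper's $\M''$ is obtained from $\M' = \M(0,3,6,9)$ by declaring $\{0,3,6\}$ collinear, which is precisely the matroid $\M(0,3,6,9) - \{0,3,6\}$ in the notation of Table~\ref{tab:p2f3_subs}; your $\M''$ instead makes $\{0,3\}$ a parallel pair, yielding a parallel extension of $U_{3,4}$. This is more than a cosmetic difference: the paper's $\M''$ in fact appears in Table~\ref{tab:p2f3_subs} as a maximal cell of the subdivision on the edge $\overline{r_0r_2}$, so by Proposition~\ref{prop:polytope} it \emph{is} an initial matroid of $\ppp$, and the final step of the paper's proof as written does not go through. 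Your $\M''$, by contrast, carries the nontrivial parallel pair $\{0,3\}$, a feature none of the matroids in the table possess, so your choice repairs the argument.

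One step of your write-up needs tightening, though. You assert that Table~\ref{tab:p2f3_subs}, supplemented by $\ppp$ itself, ``exhausts all cells'' and gives the ``complete list of initial matroids of $\ppp$.'' That is too strong: Proposition~\ref{prop:polytope} makes \emph{every} cell of every $\Delta_v$ — including every proper face of the maximal cells — the matroid polytope of some initial matroid, while the table records only the maximal cells. Your conclusion is nonetheless correct, but for a reason you state and then fail to use: $\M''$ is connected on $13$ elements, so $P_{\M''}$ has dimension $12 = \dim P_\ppp$, and therefore $P_{\M''}$ could only occur as a \emph{maximal} cell of a subdivision of $P_\ppp$. Those are exactly the cells the table lists, and $\M''$ is not among them. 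Make this dimension count explicit; without it, checking $\M''$ against the list of maximal cells does not by itself rule it out as an initial matroid.
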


\begin{proof}
We use a variant of the subdivision from the proof of Theorem~\ref{thm:finestsubs}. Let $\M$ be the matroid $\ppp$ from  Section~\ref{sec:proj_plan} -- in other words, take $\PP^2(\FF_3)$, and declare the four points on a particular line to no longer be collinear, and to have no three of them collinear.
We will number the points of this matroid as in Section~\ref{sec:proj_plan}.

Let $\M'$ be the parallel extension of $U(3,5)$ where $\{0 \}$, $\{ 3 \}$, $\{ 6 \}$ and $\{ 9 \}$ each form singleton parallel classes, and the other $9$ elements of $\{ 0, 1,\ldots, 12 \}$ form a single parallel class. The matroid polytope $P_{\M}$ can be subdivided into two pieces, one of which is $\PP^2(\FF_3)$ and the other of which is $\M'$. Let $\M''$ be the matroid with the same parallel classes as $\M'$, but where $\{ 0,3,6 \}$ are declared to be colinear. It is easy to chop $P_{\M'}$ into two pieces, one of which is $P_{\M''}$. So $\M''$ is an initial matroid of $\M'$ and $\M'$ is an initial matroid of $\M$.

However, examining the list of matroid subdivisions of $P_{\ppp}$ in Section~\ref{sec:proj_plan}, we see that $P_{\M''}$ does not occur as a face in any of them.

We note that the identity map between the ground sets of $\M$ and $\M''$ is a weak map, so this is also an example of a weak map where $\M''$ is not an initial matroid of $\M''$.
\end{proof}

\bibliographystyle{alpha}
\bibliography{sample}

\end{document}